\numberwithin{equation}{section}
\newtheorem{theorem}{Theorem}[section]
\newtheorem{definition}[theorem]{Definition}
\newtheorem{algorithm}[theorem]{Algorithm}
\newtheorem{lemma}[theorem]{Lemma}
\newtheorem{corollary}[theorem]{Corollary}
\newtheorem{remark}[theorem]{Remark}
\newtheorem{assumption}[theorem]{Assumption}
\newcommand{\ii}{i}
\newcommand{\im}{\operatorname{Im}}
\newcommand{\re}{\operatorname{Re}}
\newcommand{\ma}{\begin{pmatrix}}
	\newcommand{\am}{\end{pmatrix}}
\newcommand{\qq}{\quad \quad}
\def\supp{\mathop{\rm supp}\nolimits}
\def\Im{{\rm Im\,}}
\begin{document}

\title[Inverse resonance Love problem] {Inverse resonance problem for Love seismic surface waves}

\date{\today}

\author[Samuele Sottile]{Samuele Sottile}
\address{Department of Materials Science and Applied Mathematics, Faculty of Technology and Society, Malm\"{o} University, SE-205 06 Malm\"{o}, Sweden, \ samuele.sottile@mau.se}
\address{Centre of Mathematical Science, Faculty of Natural Science, Lund University, SE-223 62 Lund, Sweden, \ samuele.sottile@math.lu.se}

\subjclass[2010]{35R30, 35Q86, 34A55, 34L25, 81U40, 74J25} 
\keywords{inverse problems, resonances, Sturm-Liouville problem, Love surface waves}

\begin{abstract}
In this paper we solve an inverse resonance problem for the half-solid with vanishing stresses on the surface: Lamb's problem. Using a semi-classical approach we are able to simplify this three-dimensional problem of the elastic wave equation for the half-solid as a Schr{\"o}dinger equation with Robin boundary conditions on the half-line. We obtain asymptotic values on the number and the location of the resonances with respect to the wave number.
Moreover, we prove that the mapping from real compactly supported potentials to the Jost functions in a suitable class of entire functions is one-to-one and onto and we produce an algorithm in order to retrieve the shear modulus from the eigenvalues and resonances.

\end{abstract}

\maketitle



\vskip 0.25cm

\section{Introduction.}\label{intro}

\setcounter{equation}{0}
\subsection{Inverse resonance problems, previous results.}

The Love boundary value problem for the elastic isotropic medium in the half-space (see \cite{4authorsPaper})
\begin{equation}\label{love}
-\frac{\partial}{\partial Z} \hat{\mu} \frac{\partial \varphi_2}{\partial Z} + \hat{\mu} \, |\xi|^2 \varphi_2
= \omega^2 \varphi_2 ,
\end{equation}
\begin{equation}\label{love_b}
\frac{\partial \varphi_2}{\partial Z}(0) = 0,
\end{equation}
where $Z \in \left( - \infty, 0 \right]$ is the coordinate with direction normal to the boundary, $\hat{\mu}$ is the density-normalized shear modulus, $\xi$ is the dual of the of the coordinate vector $(x,y)$ parallel to the boundary, $\omega$ is the frequency and $\varphi_2$ is the component of the displacement vector on the $y$ direction. Equation \eqref{love} describes the motion of an infinitesimal element of elastic solid on a direction lying on a plane parallel to the Earth's surface. The boundary condition \eqref{love_b} says that the infinitesimal element has zero normal velocity on the Earth's surface. Equations \eqref{love}-\eqref{love_b} are obtained in \cite{4authorsPaper} after decoupling the elastic wave equation for infinitesimal solid and using the semiclassical limit. A change of variable (see Section \ref{Semiclass descr}) in equations \eqref{love}--\eqref{love_b} leads to a Schr{\"o}dinger equation 
\begin{equation}
-u''+Vu= k^2 u, 
\end{equation}
with Robin boundary condition 
\begin{equation}
u'(0) + h u(0)= 0.
\end{equation}
The current literature lacks a thorough treatment of inverse problems for Robin boundary condition, which, because of the seismology application,  earns much more interest, besides being more challenging than the Dirichlet case, which is usually treated in the literature.

In relation to seismology, this means reconstructing the parameters that determine the elasticity of the medium in the interior of the Earth from measurements performed on the boundary of the Earth's surface, which are, for example, the frequencies or the wave numbers of surface waves (eigenvalues and resonances). The Earth is a compact domain, but, for simplification, we consider it as a flat half space $\mathbb{R}^2 \times \left(-\infty, 0\right]$ prescribed with some boundary conditions rendering the problem an exterior boundary value problem.

In this paper we obtain number and location of the wave number of the resonances in the asymptotic regime. Moreover, we are able to reconstruct one of the Lam{\'e} parameters, the shear modulus, which describe the elasticity of the medium at a certain depth of the Earth's interior. The number and location of the wave number of the resonance can be interesting experimentally in order to focus the attention on a certain range of wave number or wave velocities. The reconstruction of the shear modulus from the wave numbers of the Love waves is very important for determining the material present in a certain region of the Earth's interior that we want to probe.

 There are only a few examples of complete characterizations of inverse resonance problems, for instance by Korotyaev (see \cite{Korotyaev}), who solved it on the half-line for compactly supported potentials with Dirichlet boundary condition, or Christiansen, who solved it on the whole line for step-like potentials (see \cite{Christiansen2005}), using some results from an earlier paper of Cohen--Kappeler (\cite{CohenKappeler1985}). Some other examples are \cite{Borthwick, KorotyaevRotation, KorotyaevDirac}. In \cite{Gesztesy} is shown a characterization result for real integrable potentials in the Schr{\"o}dinger operator on the half line using the Krein spectral shift function, which is connected to the scattering phase $\delta(k)$ through the Birman-Krein formula. In \cite{IantchenkoJacobi} asymptotical values of resonances are obtained for the periodic Jacobi operator with finitely supported perturbations. There are some other examples of inverse problems in Seismology. For example, in \cite{Iantchenko} they analyze an inverse spectral problem for the semiclassical Rayleigh operator starting with spectral data being the Weyl-Titchmarsh matrix. In this paper, we reconstruct the semiclassical differential Love operator from data being eigenvalues and resonances. 	It is important to stress that we study an inverse resonance problem, where the set of data is limited only to eigenvalues and resonances.  This leads to data (eigenvalues and resonances), which are easily obtained in the laboratory from scattering cross sections as opposed to other data like scattering functions, normalizing constants or Weyl-Titchmarsh function (or matrix), which cannot be obtained directly from laboratory measurements (see \cite{Brown_Weikard}).

Resonances describe the oscillation and the decay of waves on non-compact domains and, likewise for the eigenvalues, they can be computed explicitly only in very few cases, such as the Eckart barrier potential. In general, it is only possible to determine the distribution of the resonances asymptotically, similar to the Weyl law for the eigenvalues.  

What we call resonances in this paper differs from what physicists usually describe as resonances. In particular, we study the mathematical resonances in terms of the wave number $\xi$ and not the frequency $\omega$, where the latter would lead to a wave with amplitude decreasing in time with an exponential rate given by the imaginary part of the frequency. Hence, these would be spatial resonances, where the amplitude exponentially decays or increases in space with rate involving the imaginary part of the wave number. These resonances would precisely be the so-called Regge poles, which are resonances with respect to the angular variables, in the case of a spherical domain (see \cite{Regge}).

\subsection{Semiclassical description of the Love waves system.}\label{Semiclass descr}

Starting from equations \eqref{love}--\eqref{love_b} we can apply a change of variables so that the resulting boundary value problem assumes a Schr{\"o}dinger-type form. Classical ways to transform the Love problem are the \textit{calibration transform} and the \textit{Liouville transform}. By those transforms, we get a Schr{\"o}dinger-type problem with Robin boundary condition with energy $k^2$, that is related to the usual energy $\omega^2$ by $k^2= \frac{\omega^2}{\hat{\mu}_I} - \xi^2$. The difference between the two types of transforms is that in the former we obtain a potential depending on the shear modulus $\mu$ and the wave number $\xi$, whereas in the latter we obtain a potential depending on $\mu$ and the frequency $\omega$.  Once we have performed the calibration transform, we need to solve an inverse resonance  Schr{\"o}dinger problem with Robin boundary condition, where the resonances are the poles of the resolvent with respect to the parameter $k$ (or $\xi$).

This will be done by extending the result of Korotyaev for Dirichlet boundary condition (see \cite{Korotyaev}) to the mathematically more challenging case of Robin boundary condition. 

The main goal of this paper is to retrieve the shear modulus $\hat{\mu}$ ($\hat{\mu}=\mu/\rho$, with $\rho$ being the density)  as we do in Theorem \ref{Theorem on Lame parameter mu}. Theorem \ref{Theorem on Lame parameter mu} is an application of a characterization (see Theorem \ref{Injectivity of P to W map}) between a class $W_{x_I}$ of Jost functions (see Definition \ref{Class of Jost function}) and a class $\mathbb{V}_{x_I}$ of potentials (see Definition \ref{Class of potentials}).

We also obtain new direct results for the resonances consisting in the asymptotics of the counting function (Theorem \ref{Levinson for our case}) and the estimates of the resonances and their forbidden domain (Corollary \ref{Forbidden domain}), which are similar to the results in the Dirichlet case (\cite{Korotyaev}). 

We make some simplifying assumption in the following.
\begin{assumption}[Homogeneity]\label{Homogeneity assumption}
	We assume that below a certain depth $Z_I$, the medium is homogeneous, so the shear modulus is constant
	\begin{equation}\label{homogeneity in depth}
	\hat{\mu} \left(Z\right) = \hat{\mu} \left(Z_I \right):=\hat{\mu}_I  \qquad \text{for  } Z \leq Z_I.
	\end{equation}
\end{assumption}

We perform the calibration substitution in \eqref{love}--\eqref{love_b} as
\begin{equation*}
\varphi_2=\frac{1}{\sqrt{\hat{\mu}}} u,\quad  \frac{d}{dZ} \left( \hat{\mu} \frac{d}{dZ} \varphi_2 \right)= \frac{1}{4} \hat{\mu}^{-\frac{3}{2}} (\hat{\mu}')^2u-\frac{1}{2} \hat{\mu}^{-\frac{1}{2}}\hat{\mu}'' u+\hat{\mu}^\frac{1}{2}u''
\end{equation*}
and we get
\begin{equation*}
u''-|\xi|^2u=\left[\frac12\frac{\hat{\mu}''}{\hat{\mu}}-\frac14  \left( \frac{\hat{\mu}'}{\hat{\mu}} \right)^2 -\frac{1}{\hat{\mu}}\omega^2\right]u. 
\end{equation*}
We set the quasi momentum $k:=\sqrt{\frac{\omega^2}{\hat{\mu}_I}-|\xi|^2}$ and  
\begin{equation}\label{Definition of the potential}
V=\frac{1}{2}\frac{\hat{\mu}''}{\hat{\mu}}-\frac{1}{4}  \left( \frac{\hat{\mu}'}{\hat{\mu}} \right)^2 -\frac{1}{\hat{\mu}}\omega^2+\frac{1}{\hat{\mu}_I}\omega^2= \frac{(\sqrt{\hat{\mu}})''}{ \sqrt{\hat{\mu}}}-\frac{1}{\hat{\mu}}\omega^2+\frac{1}{\hat{\mu}_I}\omega^2,
\end{equation}
where $\hat{\mu}_I:=\hat{\mu}(Z_I)$ is the value of the shear modulus at the depth $Z_I$, below which the medium is homogeneous. By Assumption \ref{Homogeneity assumption}, $\hat{\mu} (Z) = \hat{\mu}_I $ constant for $Z \leq Z_I$, hence also the derivatives $\hat{\mu}'$ and $\hat{\mu}''$ vanish for $Z \leq Z_I$. This implies that the potential $V$ has compact support and depends only on $Z$ as we fixed $\omega$ and let our spectral parameter $\xi$ vary. In this way the potential $V= V_{\omega}$ can be parametrized by $\omega$ and the resonances are considered in terms of $\xi$.
\begin{remark}
We will assume that the potential $V \in \mathbb{V}_{x_I}$ (Definition \ref{Class of potentials}), that implies the shear modulus $\hat{\mu}$ to be constant below the depth $Z_I$ and to be different than $\hat{\mu}_I$ in an interval of type $\left(Z_I, a+Z_I\right)$ for $a>0$.
\end{remark}

The Love scalar equation takes the following form:
\begin{equation}\label{Schrod equation}
-u''+Vu= \lambda u, \qquad \lambda=k^2,
\end{equation}
with corresponding boundary condition that becomes of Robin type after the transformation
\begin{equation}\label{robin bc}
u'(0) + h u(0)= 0, \qquad h= - \frac{1}{2} \frac{\hat{\mu}'(0)}{\hat{\mu}(0)}.
\end{equation}
To resemble the classical formulation, we make the substitution $Z=-x$, which leads the domain to become $\left[ 0 , +\infty \right)$ and we study the problem in terms of $k$. In our case, the potential of the Schr{\"o}dinger operator is real because we are considering an elastic medium. In the case of an inelastic medium, we would have a complex potential that  implies the loss of part of the energy which is converted into heat. We make a self-adjoint realization in $L^2(\mathbb{R}_+)$ of the operator in \eqref{Schrod equation} due to the boundary condition (see \cite{Chadan}). Then the operator appearing on the left hand side of \eqref{Schrod equation}  prescribed with the domain
\begin{align}\label{domain of self adjoint operator}
D=\{ u \in H^2\left[0, + \infty\right) : u'(0) + h u(0) = 0\}
\end{align}
and the $L^2$ inner product is self-adjoint.

\subsection{The Riemann surface}

The presence of a square root in the definition of $k$ suggests that we should define a Riemann surface for $\xi$ in order for $k$ to be a single-valued holomorphic function there. We make an analytic continuation of the real positive variable $|\xi|$ to the whole complex plane and we define the new complex variable as $\xi$. Let $k_{\omega}(\xi)= \ii \sqrt{  \xi^2 - \frac{\omega^2}{\hat{\mu}_I}}$ and define the Riemann surface $\Omega$ of $k_{\omega}(\xi)$ by taking two sheets of the complex plane with cuts along $\ii \mathbb{R}  \cup  \left[- \frac{\omega}{\sqrt{\hat{\mu}_I}}, \frac{\omega}{\sqrt{\hat{\mu}_I}}\right]$, $\Omega_{+}$ called physical sheet and $\Omega_{-}$ called unphysical sheet, and gluing them in a crosswise way. On the one hand, $\Omega_{+}$ is called physical sheet as all the $\xi$ on this sheet correspond to $k$ with positive imaginary part, which lead to a physical $L^2$ solution. On the other hand, $\Omega_{-}$ is called unphysical sheet as all the $\xi$ on there correspond to $k$ with negative imaginary part, which give rise to a non $L^2$ solution.

We choose a determination of $k_{\omega}(\xi)$ by picking the branch of the square root so that $k_{\omega}(\xi) \in \ii \mathbb{R}_+$, when $\xi \in \Omega_{+}$.
The function $k_{\omega}(\xi)$ becomes single-valued holomorphic on the Riemann surface $\Omega$ and with non-zero derivative everywhere, hence it is a \textit{conformal mapping}. 
The quasi momentum $k_{\omega}(\xi)$ satisfies the following properties
\begin{align*}
&k_{\omega} \left(  \left[0-\ii 0, \frac{{\omega}}{\sqrt{\hat{\mu}_I}}-\ii 0\right)  \right) = \left(0, \frac{{\omega}}{\sqrt{\hat{\mu}_I}}\right], \\
&k_{\omega} \left(  \left[0+\ii 0, \frac{{\omega}}{\sqrt{\hat{\mu}_I}}+\ii 0\right)  \right) =  \left[- \frac{{\omega}}{\sqrt{\hat{\mu}_I}},0\right),
\end{align*}
\begin{align*}
&k_{\omega}(\pm \xi)= \ii \xi + O(|\xi|^{-1}) \qq \qq &\xi \in \Omega_{+}, \re \xi \geq 0, \nonumber \\
&k_{\omega}(\pm \xi) = -\ii \xi + O(|\xi|^{-1}) \qq \qq &\xi \in \Omega_{-}, \re \xi \geq 0, \nonumber
\end{align*}
and also
\begin{equation}\label{properties of complex conjugate of k}
k_{\omega}(\xi) = - \overline{k_{\omega}(\overline{\xi})}  =  k_{\omega}(-\xi) \qquad \text{for } \xi \in \Omega_{\pm}.
\end{equation}
In \eqref{properties of complex conjugate of k} the conjugation is made through paths non intersecting the cuts. The reflection is made by paths that cross the cuts as in Figure \ref{Reflection in Love case}. Hence, when we pass the first cut on the imaginary axis we get to the sheet $\Omega_{-}$ and when we pass through the cut $\left(-\frac{{\omega}}{\sqrt{\hat{\mu}_I}}, \frac{{\omega}}{\sqrt{\hat{\mu}_I}}\right)$ we come back to the original sheet.
From \eqref{properties of complex conjugate of k}, we see that $k_{\omega}(\xi) $ is an even function on each single sheet. We can see that for the Rayleigh problem in \cite{Iantchenko} are used the cuts at the same points of the complex plane for $q_S$, although the Riemann surface seemed defined in a different way leading to $q_S$ being an odd function, while $k$ is an even function here. 
\begin{figure}
\centering
\includegraphics[scale=1.2]{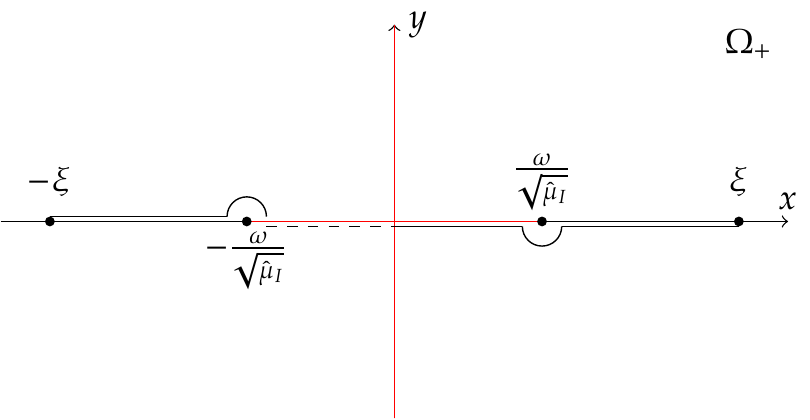}
\caption{Reflection from $\xi$ to $-\xi$ in the physical sheet $\Omega_{+}$. The dashed line represents a path in the unphysical sheet $\Omega_{-}$. The red lines represent the cuts of the Riemann sheets.}
\label{Reflection in Love case}
\end{figure}

\subsection{Cartwright class functions}

In this subsection we give some definitions and results from complex analysis that will be useful later on (see \cite[Chapter 3]{Koosis1988} and \cite[Chapter 1]{Levin}).
\begin{definition}[Exponential type function]
An entire function $f(z)$ is said to be of exponential type if there are real-valued constants $\alpha$, $o$ and $A$ such that 
\begin{align}\label{Exponential type}
|f(z)| \leq A e^{\alpha |z|^o}
\end{align}
for $z\to \infty$ in the complex plane. The infimum of the $o$  and $\alpha$ such that \eqref{Exponential type} is satisfied are called respectively \textit{order} and \textit{type} of the exponential type. 
\end{definition}
In the following we present the Hadamard factorization theorem from \cite[page 279]{Conway}, which we will be fundamental for our analysis.

\begin{theorem}[Hadamard factorization]\label{Hadamard theorem}
Let $f(z)$ be entire of finite order $\rho$ and denote by $a_n$ the sequence of its zeros $\neq 0$ (with multiplicity counted by repetition), so arranged that
\begin{align*}
0< |a_1| \leq |a_2| \leq |a_3| \leq ... \, .
\end{align*}
Then
\begin{equation}\label{product Hadamard theorem}
f(z) = z^m e^{g(z)} \prod_{n} E_p\left(\frac{z}{a_n}\right) 
\end{equation}
where $g(z)$ is a polynomial of degree $q$, $q \leq \rho$, $m$ is the multiplicity of $z=0$ as a zero of $f$ and 
\begin{align*}
E_p(z) = 
\begin{cases}
(1-z) \qq \qq \qq &p=0 \\
(1-z) e^{\frac{z}{1} + \frac{z^2}{2} + ... + \frac{z^p}{p}} &p\neq0
\end{cases}
\end{align*}
with $p=\left[\rho\right]$ being the integer part of $\rho$.
The product \eqref{product Hadamard theorem} is uniformly convergent on compact subsets of $\mathbb{C}$.
\end{theorem}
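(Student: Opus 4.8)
The plan is to follow the classical three-step argument resting on Jensen's formula and the Borel--Carath\'eodory inequality. First I would establish the convergence of the canonical product. Applying Jensen's formula to $f$ on circles $|z| = r$ and feeding in the finite-order bound $|f(z)| \leq A e^{\alpha |z|^{\rho + \epsilon}}$, I would derive that the zero-counting function $n(r) = \#\{n : |a_n| \leq r\}$ satisfies $n(r) = O(r^{\rho + \epsilon})$ for every $\epsilon > 0$. An integration by parts then shows that the exponent of convergence of the zero sequence is at most $\rho$, so in particular $\sum_n |a_n|^{-(p+1)} < \infty$ with $p = [\rho]$. Using the standard elementary-factor estimates $|\log E_p(w)| \leq C |w|^{p+1}$ for $|w| \leq \tfrac12$ and $\log|E_p(w)| \leq C|w|^p$ for $|w| \geq \tfrac12$ (together with the companion bound on its argument), this summability guarantees that the product $P(z) := \prod_n E_p(z/a_n)$ converges uniformly on compact sets and defines an entire function whose only zeros are the $a_n$, counted with the correct multiplicities.

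Second, I would form the quotient $F(z) := f(z)/(z^m P(z))$. Since numerator and denominator have exactly the same zeros with the same multiplicities, the factor $z^m$ absorbing the zero at the origin, the quotient $F$ is entire and nowhere vanishing. Because $\mathbb{C}$ is simply connected, $F = e^{g}$ for a single entire function $g$, which yields the representation \eqref{product Hadamard theorem}. It then remains only to prove that $g$ is a polynomial with $\deg g \leq \rho$.

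The last and hardest step is this degree bound on $g$. The strategy is to show that $\re g$ grows no faster than $r^{\rho + \epsilon}$ along a sequence of circles, and then to conclude by Borel--Carath\'eodory together with Cauchy's coefficient estimates. The genuine obstacle is that $\log|P(z)|$ admits no free lower bound near the zeros $a_n$; one must control $|P|$ from below away from those points. I would supply this through a minimum-modulus (Boutroux--Cartan) estimate: after deleting from the plane a union of disks centred at the $a_n$ whose radii have finite total sum, one obtains $\log|P(z)| \geq -C\, r^{\rho+\epsilon}$ outside the deleted set, and the smallness of the total radii guarantees the existence of circles $|z| = r_j$ with $r_j \to \infty$ that avoid all of them. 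On each such circle, combining $\log|f(z)| \leq C r_j^{\rho+\epsilon}$ with the lower bound on $\log|P|$ gives $\re g(z) \leq C r_j^{\rho + \epsilon}$. The Borel--Carath\'eodory inequality then upgrades this one-sided bound to $|g(z)| \leq C r_j^{\rho+\epsilon}$ on smaller concentric circles, and Cauchy's estimates force every Taylor coefficient of $g$ of index exceeding $\rho$ to vanish. Hence $g$ is a polynomial of degree $q \leq \rho$, which is precisely the assertion; the uniform convergence of the product on compacta was already secured in the first step.
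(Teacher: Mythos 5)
Your proposal cannot be checked against an internal proof, because the paper does not prove this theorem: it is quoted as classical background, with a citation to Conway (page 279), and is then only used as a black box to justify the simplified factorization \eqref{Hadarmard} of the Jost function. So the comparison is between your route and the one in the cited source. Your argument is the standard minimum-modulus route (the one in Stein--Shakarchi, for instance), and it is correct and complete in outline: Jensen's formula gives $n(r)=O(r^{\rho+\epsilon})$, hence $\sum_n |a_n|^{-(p+1)}<\infty$ since $p+1=[\rho]+1>\rho$; the estimate $|\log E_p(w)|\le C|w|^{p+1}$ for $|w|\le\tfrac12$ gives uniform convergence of the canonical product on compacta (only the tail with $|z/a_n|\le\tfrac12$ matters, so your cruder large-$|w|$ bound, which is actually not quite right when $p=0$, is never needed); the nonvanishing quotient is $e^{g}$ with $g$ entire; and the degree bound follows from the Boutroux--Cartan lower bound for $\log|P|$ off exceptional disks of finite total radius, circles $|z|=r_j$ avoiding them, Borel--Carath\'eodory, and Cauchy estimates, which force all Taylor coefficients of $g$ of index $>\rho$ to vanish. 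Conway's proof, which is what the paper implicitly points to, obtains the degree bound by a different mechanism: differentiating the Poisson--Jensen formula yields the identity $\frac{d^p}{dz^p}\bigl(\frac{f'}{f}\bigr)(z)=-p!\sum_n (a_n-z)^{-(p+1)}$ away from the zeros, and substituting the factorization $f=z^m e^{g}P$ into it gives $g^{(p+1)}\equiv 0$ directly, with no minimum-modulus argument or exceptional-set bookkeeping. Both are standard; yours trades the logarithmic-derivative computation for the exceptional-disk analysis, and either one legitimately supports the statement as the paper uses it.
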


%
%

\begin{definition}[Cartwright class]\label{Cartwright class definition}
A function $f$ is said to be in the Cartwright class with indices $\rho_{+}=A$ and $\rho_-=B$, if $f(z)$ is entire, of exponential type, and the following conditions are fulfilled:
\begin{equation}\label{Definition Cartwright class}
\int_{\mathbb{R}} \frac{\log^+ |f(x)| dx}{1 + x^2} < \infty, \quad \rho_+(f) = A, \quad \rho_-(f) = B 
\end{equation} 
where  $\rho_{\pm}(f) \equiv \lim \sup_{y\to \infty} \frac{\log|f(\pm iy)|}{y}$ and  $\log^+(x) =\max \left\lbrace \log x,0 \right\rbrace$.
\end{definition}
Basically, for a function to be of Cartwright class means that it is of exponential order 1, of type $A$ in the upper half-plane and $B$ in the lower half-plane and with positive part of the absolute value of its logarithm in $L^1(\mathbb{R}, \Pi)$, where $\Pi$ is the Poisson measure (see \cite{Poltoratski})
\begin{align}
d\Pi(t) = \frac{dt}{1 + t^2}.
\end{align}
For these functions, the Hadamard formula can be simplified. 
Cartwright class functions are very useful in view of a version\footnote{The standard version of the Paley-Wiener theorem states that the Fourier transform of Hardy space functions on a real line (in the upper half plane) are functions in $L^2(\mathbb{R}_+)$. It can be both stated for the upper half-plane and for the unit disc, which can be mapped into each other through a M{\"o}bius transformation.} of the Paley-Wiener theorem because they can be written as the Fourier transform of a compactly supported function (see Lemma \ref{lemma fourier transform}). Another useful application of the Cartwright class property is the Levinson theorem (see \cite[page 69]{Koosis1988}), which is the counterpart of the Weyl law for the resonances. We denote by $\mathcal{N}_+(r,f)$ the number of zeros of an entire function $f$ with positive imaginary part with modulus $\leq r$, and by $\mathcal{N}_-(r,f)$ the number of zeros with negative imaginary part having modulus $\leq r$.  Moreover, $\mathcal{N}_{\pm}(f) :=\lim_{r \to \infty} \mathcal{N}_{\pm}(r,f)$. The total number of zeros with modulus smaller than $r$ is $\mathcal{N}(r,f) := \mathcal{N}_+(r,f) + \mathcal{N}_-(r,f)$.
\begin{theorem}[Levinson]\label{Levinson}
Let the function $f$ be in the Cartwright class with $\rho_{+}=\rho_{-}=A$ for some $A>0$. Then 
\begin{equation*}
\mathcal{N}_{\pm}(r,f) = \frac{Ar}{\pi}\left(1 + o(1)\right) \qq \text{for } r\to \infty.
\end{equation*}
Given $\delta>0$, the number of zeros of $f$ with modulus $\leq r$ lying outside both of the two sectors $|\arg z |<\delta$, $|\arg z -\pi|< \delta$ is $o(r)$ for large $r$.
\end{theorem}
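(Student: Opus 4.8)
The plan is to treat this as the classical Levinson theorem on the angular distribution of the zeros of a Cartwright class function, deriving everything from the two hypotheses $f\in$ Cartwright and $\rho_+(f)=\rho_-(f)=A$ alone, with no appeal to any conjugation or reflection symmetry of the zero set. The first step is to identify the growth indicator $h_f(\theta):=\limsup_{r\to\infty}r^{-1}\log|f(re^{i\theta})|$. Since $f$ is of exponential type, $h_f$ is bounded and trigonometrically convex; the condition $\int_{\mathbb{R}}\frac{\log^+|f(x)|}{1+x^2}\,dx<\infty$ forces $h_f(0)=h_f(\pi)=0$ (no exponential growth is allowed along the real directions), while by definition $h_f(\pm\pi/2)=\rho_\pm=A$. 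Trigonometric convexity together with these values pins the indicator down to $h_f(\theta)=A|\sin\theta|$; this is the standard structural description of the Cartwright class (see \cite{Levin,Koosis1988}) and I expect it to be routine.

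The second step turns the indicator into zero-counting information. Cartwright class functions are of completely regular growth, so by the Levin--Pfluger theory the zeros $\{a_n\}$ carry an angular density: for any arc $[\alpha,\beta]$ with non-atomic endpoints, the number $n(r;\alpha,\beta)$ of zeros with $\alpha<\arg a_n<\beta$ and $|a_n|\le r$ satisfies $n(r;\alpha,\beta)=\frac{r}{2\pi}\,\mu_f([\alpha,\beta])\,(1+o(1))$, where $\mu_f=h_f+h_f''$ is a non-negative measure. For $h_f(\theta)=A|\sin\theta|$ a direct computation gives $\mu_f=2A\,(\delta_0+\delta_\pi)$: on each open interval $(0,\pi)$ and $(\pi,2\pi)$ one has $h_f+h_f''=0$, and the jump of $h_f'$ across each real direction equals $\rho_++\rho_-=2A$. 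The crucial point is that the two atoms are equal, and that this equality is automatic from the shape of the indicator — it is not inherited from any symmetry of the zeros themselves, which is exactly the distinction I want to respect.

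Both assertions then read off from $\mu_f$. Because $\mu_f$ is supported on $\{0,\pi\}$, every closed sub-arc of $(0,\pi)$ or of $(\pi,2\pi)$ carries zero $\mu_f$-mass, so for each $\delta>0$ the number of zeros of modulus $\le r$ lying outside the two sectors $|\arg z|<\delta$ and $|\arg z-\pi|<\delta$ is $o(r)$; this is the second statement, and almost all zeros accumulate on the two real directions. The atom $2A\,\delta_0$ then gives $\frac{r}{2\pi}\cdot 2A\,(1+o(1))=\frac{Ar}{\pi}(1+o(1))$ zeros in the sector about the positive real direction, counted by $\mathcal{N}_+(r,f)$, and the atom $2A\,\delta_\pi$ gives $\frac{Ar}{\pi}(1+o(1))$ zeros in the sector about the negative real direction, counted by $\mathcal{N}_-(r,f)$; hence $\mathcal{N}_\pm(r,f)=\frac{Ar}{\pi}(1+o(1))$. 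Summing recovers the total $\mathcal{N}(r,f)=\frac{2Ar}{\pi}(1+o(1))$, which I would cross-check by Jensen's formula: $\frac{1}{2\pi}\int_0^{2\pi}\log|f(re^{i\theta})|\,d\theta\sim\frac{r}{2\pi}\int_0^{2\pi}A|\sin\theta|\,d\theta=\frac{2Ar}{\pi}$, and a Tauberian step for the monotone function $n(r)$ converts the logarithmic mean into $n(r)$ itself.

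The main obstacle is the second step: justifying that a Cartwright function is of completely regular growth and that the Levin--Pfluger angular-density formula applies, with the measure-theoretic care needed when zeros sit exactly on the rays $\arg z=0,\pi$ and the usual subtlety of passing from $\limsup$ to a genuine limit in the indicator. If I wanted to sidestep the full completely-regular-growth machinery, the concentration near the real axis can be obtained directly from the Cartwright property $\sum_{|a_n|>1}\frac{|\operatorname{Im}a_n|}{|a_n|^2}<\infty$ — equivalently $\sum|\sin(\arg a_n)|/|a_n|<\infty$ — which already forces all but $o(r)$ zeros into the two real sectors, while the equal leading counts on the two sides can be extracted from a Carleman-type formula applied separately to the right and left half-discs. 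The cleanest route, however, remains the indicator computation together with the statement of Levinson's theorem quoted in \cite[p.~69]{Koosis1988}.
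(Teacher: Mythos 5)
Your proposal is correct in substance, but be aware that the paper contains no proof of this theorem to compare against: it is imported verbatim as a classical result, with a bare citation to \cite[p.~69]{Koosis1988}. What you supply is the standard Levin--Pfluger route, and it is sound as a sketch. The indicator computation is right: $h_f(0),h_f(\pi)\le 0$ (this step is not a pointwise triviality from the logarithmic integral alone --- it rests on $f$ being of bounded type in each half-plane and the Poisson representation there, i.e.\ exactly the Cartwright machinery in \cite{Levin,Koosis1988}), combined with $h_f(\theta)+h_f(\theta+\pi)\ge 0$ for type-one functions, forces $h_f(0)=h_f(\pi)=0$, and trigonometric convexity then gives $h_f(\theta)=A|\sin\theta|$. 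Your constants check out: the jump of $h_f'$ across each real direction is $\rho_++\rho_-=2A$, so the angular density measure $\tfrac{1}{2\pi}\left(h_f''+h_f\right)$ reduces to atoms of mass $2A$ at $\arg z=0,\pi$, yielding $\frac{Ar}{\pi}(1+o(1))$ zeros near each real direction and $o(r)$ elsewhere. The Jensen cross-check is valid (for nondecreasing $n$, $\int_0^r n(t)t^{-1}dt=cr+o(r)$ does imply $n(r)=cr+o(r)$, by comparing over $[r,\lambda r]$ and letting $\lambda\to1$), though as you say it only controls the total count. You correctly isolate the one nontrivial input --- that Cartwright functions are of completely regular growth --- and your fallback (the Blaschke-type condition $\sum|\operatorname{Im}a_n|/|a_n|^2<\infty$ for the sector statement, plus Carleman's formula in each half-disc for the counts) is essentially Koosis's own proof, so either branch closes the gap legitimately.

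Two remarks worth recording. First, you silently --- and correctly --- read $\mathcal{N}_\pm(r,f)$ as counting zeros with positive/negative \emph{real} part. The paper's definition paragraph instead says positive/negative \emph{imaginary} part, under which reading the theorem is false as stated: take $f(z)=\sin z$, which is Cartwright with $\rho_\pm=1$ and has all zeros real, so both counts vanish; and in the paper's own application almost all zeros (resonances) lie in the lower half-plane. Your interpretation is the one that makes the statement true and is consistent with the sector clause. Second, your route proves more than the stated symmetric case: with $h_f(\theta)=\rho_+\sin\theta$ above and $\rho_-|\sin\theta|$ below the real axis, the atoms have mass $\rho_++\rho_-$ and the total count is $\frac{(\rho_++\rho_-)r}{\pi}(1+o(1))$. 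This asymmetric version is what Corollary \ref{Levinson for our case} actually requires, since there $\rho_+(f_h)=0\neq\rho_-(f_h)=2x_I$, so the theorem as stated (demanding $\rho_+=\rho_-=A>0$) does not literally cover the paper's application, while your argument does.
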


\section{The scattering problem}

By a \textit{direct problem} (or forward problem) we mean the problem of finding the scattering or spectral data associated with a differential operator in a certain class and all their properties. As we can see from \eqref{love} and \eqref{love_b}, the boundary value problem is determined by $V$, for fixed $h$. Hence, we want to define a suitable class for this pair, such that we can find a mapping from this pair to the scattering data of the problem. 

In this section, we introduce the Jost solution and the Jost function because they are the key ingredients we need to be able to obtain information about the scattering data. Below we give some definitions that are essential for our next results.
\begin{definition}[Bargmann-Jost-Kohn]
We define the \textbf{Bargmann-Jost-Kohn} class of potentials\footnote{For this class of potentials it is possible to write the Jost solution in terms of a transformation operator, as we will see. Potentials in this class are short-range potentials.}, and we denote it by $L_{1,1}$, as all the real functions $V(x)$ such that the potential and its first momentum are integrable
\begin{equation}\label{Bargmann-Jost-Kohn}
\int_{0}^{\infty} | (1 + x) \, V(x)| dx < \infty.
\end{equation}
\end{definition}

\begin{definition}[Class of potentials]\label{Class of potentials}
We denote by $\mathbb{V}_{x_I}$ the class of real potentials $V$ such that $V \in  L^1(\mathbb{R}_+)$, $\supp V \subset \left[0,x_I\right]$ for some $x_I >0$ and for each $\epsilon >0$ the set $\left(x_I-\epsilon, x_I\right) \cap \supp V$ has positive Lebesgue measure. 
\end{definition}

	\begin{remark}
We give these two definitions of classes of potentials $L_{1,1}$ and $\mathbb{V}_{x_I}$ to point out that we could solve the inverse problem with either class of potentials. If we consider  $V \in \mathbb{V}_{x_I}$ then we can reconstruct the potential from  only eigenvalues and resonances as data. Otherwise, if $V \in L_{1,1}$ we can reconstruct the potential from the scattering data, such as the scattering function, the eigenvalues and the $L^2$ norm of the eigenfunctions.
\end{remark}

	\begin{definition}[Jost solution]\label{Jost solution definition}
The \textbf{Jost solutions} $f^\pm$ are the unique solutions to the differential equation \eqref{Schrod equation} that satisfy the following condition
\begin{equation}\label{Jost solution condition}
f^\pm(x,k)=e^{\pm ik x}\qquad \mbox{for} \;\; x>x_I.
\end{equation}
\end{definition}
The radiation condition \eqref{Jost solution condition} tells us that the solution of the differential equation must behave like a plane wave far from the scattering area (for $x>x_I$) and implies uniqueness. Since $V \in L_{1,1}$ then the Jost solution can be rewritten as:
\begin{equation}\label{Jost solution +-}
f^{\pm}(x,k)= e^{\pm i k x} + \int^{\infty}_{x} A(x,t) \, e^{\pm i k t} \, dt,
\end{equation}
where $A(x,t)$ is the kernel of the scattering transformation operator (see \cite[Section 4.2]{levitaninverse} or \cite[Theorem 2.1.3]{FreilingYurko}).

The self-adjointness of the differential operator implies that, for real $k$, $f^-(x,k) = \overline{f^+(x,k)} = f^+(x,-k)$. These properties suggest that we  remove the superscript $+$ and $-$ and set $f^+(x,k)=:f(x,k)$. Accordingly, we will refer to $f^-$ as the conjugate of $f$.
The Wronskian between the Jost solution is independent of $x$ (see \cite{levitaninverse}), hence
\begin{equation}\label{wronskian jost sol}
W(f,\overline{f})=  \lim_{x \to \infty} W(f,\overline{f})(x) = \lim_{x \to  \infty} \left(f \overline{f}' -f' \overline{f} \right) (x) = -2ik.
\end{equation}
By solving \eqref{Schrod equation} with the variation of constants method, we can get a Volterra-type equation for the Jost solution $f(x,k)$ (see \cite[Section 4.2]{levitaninverse})
\begin{equation}\label{Volterra-type eq for f}
f(x,k)= e^{ik x} - \int_{x}^{\infty} \frac{\sin \left[ k(x-t) \right] }{k} V(t) f(t,k) dt.
\end{equation}
In this form, the Jost solution can be naturally expanded as a power series of the potential, by Volterra iteration. 

It is a known fact that the spectrum for the operator \eqref{Schrod equation} with domain \eqref{domain of self adjoint operator} consists of a finite number of purely imaginary and simple eigenvalues in $k$ (see \cite{levitaninverse}).

	\begin{definition}[Jost function]\label{Jost function definition}
We define the \textbf{Jost function} $f_h(k)$ of the Schr{\"o}dinger operator $-\frac{d^2}{dx^2} + V$ in \eqref{Schrod equation} with Robin boundary condition \eqref{robin bc} as the quantity
\begin{equation}\label{Jost function}
f_h(k) = f(0,k) h + f'(0,k)
\end{equation}
where $f(0,k)$ is the Jost solution evaluated at $x=0$.
\end{definition}
	We enumerate the zeros of $f_h$, which are eigenvalues and resonances, as $\left(k_j\right)_{j \in \mathbb{N}}$.

	\begin{definition}[Scattering function]
We define the \textbf{scattering function} $S(k)$ of the problem as the negative of the ratio between the Jost function $f_h(k)$ and the reflected Jost function $f_h(-k)$
\begin{equation}\label{scattering function}
S(k)= -\frac{f_h(-k)}{f_h(k)}=  -e^{2i\delta(k)}.
\end{equation}	
\end{definition}
\begin{remark}
In the case of Dirichlet boundary condition it is usually defined as $\frac{f(0,-k)}{f(0,k)}$, where $f(0,k)$ is the Jost function in the Dirichlet case (see \cite{Korotyaev} or \cite[Section 4.2]{levitaninverse}). In the case of Robin boundary condition, it is usually defined as in \eqref{scattering function} (see \cite{Xiao}).
\end{remark}

	In this subsection we will obtain some properties of the Jost function, that will help us with the direct and inverse results. For the following, we recall the definition of $\mathcal{N}_+(f)$. In the following and throughout we define the (complex) Fourier transform $$\hat{g}(k):= \int_{I} g(x) e^{2ixk} dx,$$ $k\in \mathbb{C}$, for $g \in L^1(I)$ with bounded support, where $I$ is an interval. Moreover, throughout the paper we denote by $\mathbb{C}_{\pm}$ the upper and lower open half, respectively, of the complex plane $\mathbb{C}$.

\begin{definition}[Class of Jost function]\label{Class of Jost function}
We define the class $W_{x_I}$ of Jost functions  as the class of all entire functions $f$ such that:
\begin{enumerate}[I]
\item \label{fourier transform of the Jost function}
$f(k) \neq 0$ for all $k \in \mathbb{R}$ and for some $F \in \mathbb{V}_{x_I}$ the function $f$ is given by
\begin{equation}\label{condition 1 class of Jost function}
f(k) = \ii k \left[ 1 - \frac{1}{2ik} \left( \hat{F}(0) - \hat{F}(k) \right)\right], \qq \hat{F}(k) = \int_{0}^{x_I} F(x) e^{2ixk} dx.
\end{equation}
\item \label{condition 2 Jost function class}
All zeros $k_1,... \, ,k_N$ of the function $f$ in $\mathbb{C}_+$ are simple, belong to $i\mathbb{R}_+$ and satisfy for $n=1,... \, , N= \mathcal{N}_+(f)$ :
\begin{equation}\label{condition 2 class of Jost function}
|k_1 |> |k_2|>... \, >|k_N|>0 \quad \text{and}  \quad f_h(-k_n)(-1)^n < 0.
\end{equation}
\end{enumerate}
\end{definition}
It will be clear later why we call $W_{x_I}$ class of Jost functions. In particular, the goal is to prove a bijection between $\mathbb{V}_{x_I}$ and $W_{x_I}$ (see Theorem \ref{Injectivity of P to W map}). In order to do so, we first have to prove that the Jost function is entire and that it satisfies the two conditions of the definition of the class.

In the next theorem we prove that the Jost solution and the Jost function are entire in $k$ (see also \cite[Lemma 3.1.4.]{Marchenko} for the Dirichlet case and $V \in L_{1,1}$ potential).
\begin{theorem}\label{Analyticity and continuity of Jost function and solution}
For each fixed $x\geq0$, the Jost solution $f(x,k)$ and the Jost function $f_h(k)$ are entire in $k$.
\end{theorem}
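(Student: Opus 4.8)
The plan is to realise the Jost solution as the sum of its Volterra/Neumann series coming from the integral equation \eqref{Volterra-type eq for f}, to check that every term of that series is entire in $k$ for fixed $x$, and to prove that the series converges uniformly on compact subsets of $\mathbb{C}$; the Weierstrass theorem on uniform limits of holomorphic functions then yields entirety of the sum. The entirety of the Jost function $f_h$ follows because it is a finite linear combination of $f(0,k)$ and its $x$-derivative at $0$, each of which will be seen to be entire.

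First I would set $f_0(x,k)=e^{ikx}$ and iterate \eqref{Volterra-type eq for f},
$$f_{n+1}(x,k) = -\int_x^{\infty}\frac{\sin[k(x-t)]}{k}\,V(t)\,f_n(t,k)\,dt,\qquad f(x,k)=\sum_{n\ge 0}f_n(x,k).$$
To obtain clean estimates I would pass to the reduced solution $m(x,k)=e^{-ikx}f(x,k)$, whose kernel becomes $\frac{1-e^{2ik(t-x)}}{2ik}=-\int_0^{t-x}e^{2iks}\,ds$. This expression is an entire function of $k$ with the singularity at $k=0$ removed, and on any compact $K\subset\mathbb{C}$ with $|\operatorname{Im}k|\le M$ it is bounded by $x_I e^{2Mx_I}=:C_K$, because $\supp V\subset[0,x_I]$ forces $0\le t-x\le x_I$ in every integral. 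Since $e^{ikx}$ is entire and each integration runs over the compact set $[x,x_I]$ against the fixed $L^1$ weight $V$, an inductive application of Morera's theorem together with Fubini's theorem (to interchange the closed-contour integral testing analyticity with the $t$-integral) shows that every $f_n(\cdot\,,k)$ is entire in $k$. The uniform convergence is then the standard Volterra estimate: the kernel bound $C_K|V(t)|$ and the ordered-simplex integration give, on $K$,
$$|m_n(x,k)|\le \frac{\left(C_K\,\|V\|_{L^1}\right)^n}{n!},\qquad\text{hence}\qquad |f_n(x,k)|\le e^{Mx}\,\frac{\left(C_K\,\|V\|_{L^1}\right)^n}{n!}.$$
The majorant $\sum_n (C_K\|V\|_{L^1})^n/n!=e^{C_K\|V\|_{L^1}}$ is finite, so $\sum_n f_n$ converges uniformly on $K$, and $f(x,k)$ is entire in $k$ for each fixed $x\ge 0$; in particular so is $f(0,k)$.

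For the Jost function $f_h(k)=h\,f(0,k)+f'(0,k)$ it remains to treat the $x$-derivative. Differentiating \eqref{Volterra-type eq for f} in $x$ (the boundary term drops out since $\sin 0=0$) gives
$$f'(x,k)=ik\,e^{ikx}-\int_x^{\infty}\cos[k(x-t)]\,V(t)\,f(t,k)\,dt,$$
which is once more an integral of functions entire in $k$ over the compact support of $V$; the same Morera/Fubini bookkeeping, now using that $f(t,k)$ is entire with the uniform bounds just established, shows $f'(0,k)$ is entire, so $f_h$ is entire as a finite linear combination of entire functions.

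The main obstacle is purely the analytic bookkeeping around $k=0$: one must handle the removable singularity of the kernel honestly so that the term-by-term analyticity via Morera is legitimate, and one must justify differentiation under the integral sign (equivalently, that $\partial_x$ commutes with $\sum_n$) so that the integral representation for $f'$ is valid; everything else reduces to a routine Volterra iteration. I expect no conceptual difficulty beyond keeping these interchanges uniform on compacts.
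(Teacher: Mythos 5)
Your proposal is correct and follows essentially the same route as the paper's proof: pass to the reduced (Faddeev) solution $\chi(x,k)=e^{-ikx}f(x,k)$, iterate the Volterra equation \eqref{Volterra-type eq for f}, use the compact support of $V$ and ordered-simplex integration to get factorial bounds $\sim \frac{1}{n!}\left(C\|V\|_{L^1}\right)^n$, conclude entirety of $f(x,\cdot)$ by uniform convergence on compacts, and then handle $f'(0,k)$ by differentiating the integral equation so that $f_h=hf(0,\cdot)+f'(0,\cdot)$ is entire. Your explicit treatment of the removable singularity at $k=0$ via the kernel identity $\frac{1-e^{2ik(t-x)}}{2ik}=-\int_0^{t-x}e^{2iks}\,ds$ and the Morera/Fubini justification are slightly more careful than the paper's presentation, but they are refinements of the same argument, not a different one.
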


\begin{proof}
Recall from \eqref{Volterra-type eq for f} the following Volterra-type equation
\begin{equation*}
f(x,k)= e^{ik x} - \int_{x}^{\infty} \frac{\sin \left[ k(x-t) \right] }{k} V(t) f(t,k) dt.
\end{equation*}
Multiplying \eqref{Volterra-type eq for f} by $e^{-ik x}$ and defining the Faddeev solution as $$\chi (x,k) = f(x,k) e^{-ik x}$$ we get
\begin{equation} \label{definition of chi}
\chi (x,k) = 1 - \int_{x}^{\infty} \frac{ 1 -  e^{-2ik (x-t)} }{2ik} V(t) \chi (t,k) dt.
\end{equation}
Iterating \eqref{definition of chi} gives the series
\begin{align}\label{asympt sum of chi}
\chi (x,k) =\sum_{l=0}^{\infty} \chi ^{(l)}(x,k)
\end{align}
where 
\begin{align*}
\chi ^{(0)}(x,k) = 1
\end{align*}
and
\begin{align*}
\chi ^{(l)}(x,k) = (-1)^l \int_{x}^{\infty} \int_{t_1}^{\infty} \cdots  \int_{t_{l-1}}^{\infty} \prod_{j=1}^{l} \frac{ 1 -  e^{-2\ii k (t_{j-1}-t_{j})} }{2\ii k} V(t_j)   dt_l  \cdots dt_1
\end{align*}
with the convention that $t_0=x$. Moreover, we have the estimate 
\begin{align}\label{estimate on chi l}
&|\chi ^{(l)}(x,k)| \leq \int_{x}^{\infty} \int_{t_1}^{\infty} \cdots \int_{t_{l-1}}^{\infty} \frac{ e^{ ( |\im k| - \im k)\left[ (t_1-x) + (t_2 - t_1) + \cdots (t_l -t_{l-1}) \right]}  }{\left(\max (1,|k|)\right)^{l}}  \nonumber \\
&\omit\hfill$\displaystyle |V(t_1)| \cdots |V(t_l)| dt_1 \cdots dt_l $ \nonumber \\
&= \frac{ e^{ ( |\im k| - \im k) (x_I - x)}}{\left(\max (1,|k|)\right)^{l}} \int_{x}^{x_I} \int_{t}^{x_I} \cdots \int_{t_{l-1}}^{x_I} |V(t_1)| \cdots |V(t_l)| dt_1 \cdots dt_l \nonumber \\
& = \frac{ e^{ ( |\im k| - \im k) (x_I - x)}}{\left(\max (1,|k|)\right)^{l}} \frac{1}{l!} \left( \int_{x}^{x_I} |V(t)| dt \right)^l
\end{align}
where we have used the fact that the potential has compact support, $V(x) = 0$ for $x>x_I$, and in the last passage, we have used that those l-integrals with respect to different variables are equal to $l$ times the product of the integral of the potential divided by $l!$.
Each term of the power series is bounded by the term appearing in \eqref{estimate on chi l}, which leads to a uniformly convergent series on every compact set of $k$. By Weierstrass M-test, also the original series \eqref{asympt sum of chi} converges uniformly and absolutely on every compact set, hence, $	\chi (x,k)$ is entire in $k$. Then, also $f(x,k)=\chi (x,k) e^{ikx}$ is entire in $k$. 

For $f'(x,k)$ we have 
\begin{align*}
&f'(x,k) e^{-ikx} \\
& = ik - \int_{x}^{\infty} \frac{1+e^{-2ik(x-t_1)}}{2} V(t_1) dt_1 \\
&\phantom{=\;}+ \int_x^{\infty} \int_{t_1}^{\infty}  \frac{1+e^{-2ik(x-t_1)}}{2} \frac{1-e^{2ik(t_2 - t_1)}}{2ik}  V(t_1) V(t_2) \chi(t_2,k) dt_1 dt_2
\end{align*}
that for the same argument lead to $f'(x,k)$ being entire in $k$. Therefore $f_h(k)$ is also entire.
\end{proof}

	From the proof of Theorem \ref{Analyticity and continuity of Jost function and solution} we get 
\begin{align}
&\left| \chi(x,k) \right| \leq 1 +\left( \sum_{l=1}^{\infty}  \frac{1}{l!} \left( \frac{\int_{x}^{x_I} |V(t)| dt}{\max (1,|k|)} \right)^l\right) e^{ ( |\im k| - \im k) (x_I - x)} \nonumber \\
&= 1 + \left( \sum_{l=0}^{\infty}  \frac{1}{l!} \left( \frac{\int_{x}^{x_I} |V(t)| dt}{\max (1,|k|)} \right)^l - 1\right) e^{ ( |\im k| - \im k) (x_I - x)}, \label{Estimate of chi in both sheet}
\end{align}
that becomes
\begin{align*}
\left| \chi(x,k) \right| \leq e^{\frac{\int_{x}^{x_I} |V(t)| dt}{\max (1,|k|)}}  e^{ ( |\im k| - \im k) (x_I - x)},
\end{align*}
since $1 \leq e^{ ( |\im k| - \im k) (x_I - x)}$.

	The following theorem is a classical result on the simplicity of the eigenvalues for self-adjoint Schr{\"o}dinger operators with $L_{1,1}$ potentials. We follow a proof similar to \cite[Chapter 4, page 79]{levitaninverse}.

\begin{theorem}\label{Zeros of Jost function are finite, simple and pure imaginary}
Let $V \in \mathbb{V}_{x_I}$, then the zeros of the function $f_h(k)$ in the upper half-plane are all simple.
\end{theorem}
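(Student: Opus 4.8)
The plan is to prove that each zero $k_n$ of $f_h$ in $\mathbb{C}_+$ is simple by showing that the $k$-derivative $\partial_k f_h(k_n)$ does not vanish, and to obtain this through a Wronskian identity generated by differentiating the Schr\"odinger equation with respect to $k$. The starting observation is that a zero $k_n \in \mathbb{C}_+$ of $f_h$ turns the Jost solution into an eigenfunction: the relation $f_h(k_n) = h f(0,k_n) + f'(0,k_n) = 0$ is exactly the Robin boundary condition \eqref{robin bc}, and since $V$ has compact support we have $f(x,k_n) = e^{ik_n x}$ for $x > x_I$, which lies in $L^2(\mathbb{R}_+)$ because $\im k_n > 0$. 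As $k_n \in \ii \mathbb{R}_+$ and $V$ is real, the function $f(\cdot,k_n)$ is real-valued, so that $\int_0^\infty f(x,k_n)^2\,dx = \|f(\cdot,k_n)\|_{L^2}^2 > 0$.

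Next, writing $\partial_k f$ for the derivative in $k$, which exists and may be differentiated termwise inside the Volterra series by the locally uniform estimates established in Theorem \ref{Analyticity and continuity of Jost function and solution}, I would differentiate $-f'' + Vf = k^2 f$ in $k$ to obtain $-(\partial_k f)'' + V\,\partial_k f = k^2 \partial_k f + 2k f$. A direct computation with $W(g_1,g_2) = g_1 g_2' - g_1' g_2$ then gives
$$\frac{d}{dx} W(f,\partial_k f) = f\,(\partial_k f)'' - f''\,\partial_k f = -2k f^2.$$
Integrating over $[0,\infty)$ and using that for $x > x_I$ one has $f = e^{ik x}$ and $\partial_k f = \ii x\, e^{ikx}$, both of which decay to $0$ as $x \to \infty$ when $\im k_n > 0$, the boundary contribution at infinity vanishes, leaving
$$W(f,\partial_k f)(0) = 2k_n \int_0^\infty f(x,k_n)^2\,dx.$$

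Finally, I would evaluate the left-hand side at $x=0$. Substituting $f'(0,k_n) = -h\,f(0,k_n)$ gives
$$W(f,\partial_k f)(0) = f(0,k_n)\big[(\partial_k f)'(0,k_n) + h\,\partial_k f(0,k_n)\big] = f(0,k_n)\,\partial_k f_h(k_n),$$
where the last equality uses that $\partial_k$ commutes with both $\partial_x$ and with the linear combination defining $f_h$. This produces the identity $f(0,k_n)\,\partial_k f_h(k_n) = 2k_n \|f(\cdot,k_n)\|_{L^2}^2$. Since $k_n \neq 0$ and the eigenfunction has strictly positive $L^2$ norm, the right-hand side is nonzero; hence both factors on the left are nonzero, and in particular $\partial_k f_h(k_n) \neq 0$, so $k_n$ is a simple zero. (As a byproduct $f(0,k_n)\neq 0$, which in any case follows directly, since $f(0,k_n)=0$ together with $f'(0,k_n)=-h f(0,k_n)=0$ would force $f\equiv 0$ by uniqueness.)

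I expect the only delicate point to be the justification that differentiation in $k$ can be carried inside the series and that the boundary term at infinity truly vanishes. Both are made harmless by the compact support of $V$: beyond $x_I$ the functions $f$ and $\partial_k f$ are \emph{exactly} $e^{ik x}$ and $\ii x\, e^{ikx}$, so they and their derivatives decay exponentially, and the termwise differentiation is controlled by the uniform bounds of Theorem \ref{Analyticity and continuity of Jost function and solution}, so that no tail estimates for a general $L_{1,1}$ potential are required.
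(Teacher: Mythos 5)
Your proof is correct and takes essentially the same approach as the paper's: both differentiate the Schr\"odinger equation in $k$, integrate the resulting Wronskian identity (the paper's \eqref{important formula for norming constants}) using the decay of $f$ and $\partial_k f$ at infinity for $\im k>0$, and then invoke the Robin condition at $x=0$. The only differences are presentational: the paper argues by contradiction from $f_h(k_0)=\dot f_h(k_0)=0$, whereas you conclude directly that $f(0,k_n)\,\partial_k f_h(k_n)=2k_n\int_0^\infty f^2(x,k_n)\,dx\neq 0$, and your explicit justification that this integral is positive (via $k_n\in\ii\mathbb{R}_+$ and the real-valuedness of $f(\cdot,k_n)$) makes precise a point the paper leaves implicit.
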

\begin{proof}
We assume towards a contradiction that there exists a root $k_0$ which is not simple: $f_h(k_0)=0$ and $\dot{f}_h(k_0) = 0$, where the dot indicates the derivative with respect to $k$. These two conditions mean that:
\begin{align}\label{multiple roots}
f'(0,k_0)=-f(0,k_0)h \nonumber \\
\dot{f}' (0,k_0) = - \dot{f}(0,k_0) h
\end{align}
We consider the  differential equation satisfied by the Jost solution $f(x,k)$, and its derivative with respect to $k$
\begin{align*}
-f''(x,k) + V(x)f(x,k) = k^2 f(x,k) \\
-\dot{f}''(x,k) + V(x)\dot{f}(x,k) = 2kf(x,k) + k^2 \dot{f}(x,k) . 
\end{align*}
We multiply the first equation by $\dot{f}(x,k)$ and the second by $f(x,k)$ and subtract the second from the first 
\begin{equation*}
-\dot{f}(x,k) f''(x,k) + f(x,k) \dot{f}''(x,k) = -2kf^2(x,k).
\end{equation*}
Then, integrating by parts and considering that 
\begin{align*}
&\lim_{x \to \infty} \left( f(x,k) \dot{f}'(x,k) - f'(x,k)\dot{f}(x,k) \right) = \lim_{x \to \infty} \left( e^{ikx}(ix)(ik)e^{ikx} \right. \\
&\left. -  (ix)e^{ikx} (ik) e^{ikx} \right) =0
\end{align*}
we get
\begin{align*}
&f'(0,k) \dot{f}(0,k) - \dot{f}'(0,k)f(0,k) + \int^{\infty}_{0} f'(x,k) \dot{f}'(x,k) dx \\
&- \int^{\infty}_{0} \dot{f}'(x,k) f'(x,k)  dx=  -2k \int_{0}^{\infty} f^2(x,k) dx 
\end{align*}
which becomes
\begin{equation}\label{important formula for norming constants}
f'(0,k) \dot{f}(0,k) - \dot{f}'(0,k)f(0,k) = -2k \int_{0}^{\infty} f^2(x,k) dx.
\end{equation}
Evaluating this equation at $k_0$ and using the condition of a multiple root \eqref{multiple roots} we get a contradiction, in fact, the left-hand side is zero while the right-hand side is not. 
\end{proof}
\begin{remark}
For the Robin Laplacian in the half-line the eigenvalue $-h^2$ is simple. In the one-dimensional case, we have seen that the simplicity of the eigenvalue is stable under the addition of a small compact perturbation. However, for dimensions $d\geq 2$, this is not true in general. Indeed, if we consider $-\Delta + V$ in $\mathbb{R}^3$ with $V$ being the Coulomb potential, the eigenvalues are all simple and accumulating at zero. However, if we add a potential coming from an external electric field, we will observe a splitting of the eigenvalues which become multiple (Stark effect). The same happens if we add a potential coming from a magnetic field (Zeeman effect).
\end{remark}

	\begin{lemma}[Uniform bounds on Jost function]\label{uniform bounds on Jost function}
Let $V \in \mathbb{V}_{x_I}$. Then the Jost function is  of exponential type and satisfies the following estimates:
\begin{equation}\label{bound first iteration Jost function}
\left|f_h(k)- ik \right| \leq  \left\| V \right\|  e^{(|\im k| - \im k)x_I}  e^{a}
\end{equation}
\begin{align}\label{second iteration Jost function}
& \left|f_h(k) -ik - h + \frac{\hat{V}(0) + \hat{V}(k) }{2}    \right|  \leq  \left[ |h|  + \frac{\left\| V \right\| }{2}  \right] a    e^{(|\im k| - \im k)x_I} e^{a}
\end{align}
where $\hat{V}(k) = \int_0^{x_I} e^{2ikt} V(t) dt$ is the Fourier transform of the potential\footnote{In physics the Fourier transform of the potential would be the scattering amplitude.}  $V$ and $a =\frac{ ||V||}{\max (1,|k|)}$, with $|| V || := \int_{\mathbb{R}} |V(x)| dx$.
\end{lemma}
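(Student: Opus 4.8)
The plan is to feed the convergent Volterra iteration series, already constructed in the proof of Theorem \ref{Analyticity and continuity of Jost function and solution}, into the definition $f_h(k) = h f(0,k) + f'(0,k)$, peel off the leading terms, and control the remainders with the uniform bound on $\chi^{(l)}$ derived there. Recall that $f(0,k) = \chi(0,k) = \sum_{l\ge 0}\chi^{(l)}(0,k)$ with $\chi^{(0)} = 1$, and that \eqref{estimate on chi l} gives $|\chi^{(l)}(0,k)| \le \frac{1}{l!}\left(\frac{\|V\|}{\max(1,|k|)}\right)^l e^{(|\im k|-\im k)x_I}$, so that $|\chi(0,k) - 1| \le (e^a - 1)e^{(|\im k|-\im k)x_I} \le a\,e^a\, e^{(|\im k|-\im k)x_I}$, using $e^a - 1 \le a e^a$. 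For the derivative I would use the explicit expansion of $f'(x,k)e^{-ikx}$ displayed in that proof, evaluated at $x=0$: its zeroth-order term is $ik$, its first-order term is $-\int_0^\infty \tfrac{1+e^{2ikt}}{2}V(t)\,dt = -\tfrac12(\hat V(0) + \hat V(k))$, and the remainder is the second-order tail $R_2$.

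Second, I would establish the pointwise envelope estimates that make the exponential factors telescope. The elementary inequalities $\frac{|1+e^{2ikt}|}{2}\le e^{(|\im k|-\im k)t}$ and $\left|\frac{1-e^{2ik(t_2-t_1)}}{2ik}\right|\le \frac{e^{(|\im k|-\im k)(t_2-t_1)}}{\max(1,|k|)}$, combined with $|\chi(t_2,k)|\le e^a\, e^{(|\im k|-\im k)(x_I - t_2)}$, let the three exponential factors collapse to the single envelope $e^{(|\im k|-\im k)x_I}$ after integrating over $0\le t_1\le t_2\le x_I$. Carrying out the double integral of $|V(t_1)||V(t_2)|$ over that simplex produces $\tfrac12\|V\|^2$, whence $|R_2|\le \tfrac{\|V\|}{2}\,a\,e^{(|\im k|-\im k)x_I}e^a$.

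Third, I would assemble the two inequalities. For \eqref{second iteration Jost function} write $f_h(k) - ik - h + \tfrac12(\hat V(0)+\hat V(k)) = h\,(\chi(0,k)-1) + R_2$; the first summand is bounded by $|h|\,a\,e^{(|\im k|-\im k)x_I}e^a$ via the tail bound on $\chi(0,k)-1$ and the second by the estimate on $R_2$, giving precisely $\big(|h| + \tfrac{\|V\|}{2}\big)a\,e^{(|\im k|-\im k)x_I}e^a$. The coarser bound \eqref{bound first iteration Jost function} is obtained from the same decomposition by retaining only the leading term $ik$ and absorbing the lower-order contributions into the envelope $e^{(|\im k|-\im k)x_I}e^a$. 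Finally, exponential type is immediate: \eqref{bound first iteration Jost function} yields $|f_h(k)|\le |k| + \|V\|e^{(|\im k|-\im k)x_I}e^a \le C(1+|k|)e^{2x_I|\im k|}$, which is of order one and finite type.

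The main obstacle is the bookkeeping for $f'(0,k)$: unlike the $\chi$-series, each iterate of the derivative carries the extra factor $\tfrac{1+e^{-2ik(x-t_1)}}{2}$, so one must verify that its modulus is controlled by $e^{(|\im k|-\im k)t_1}$ and that, after multiplication with the remaining kernel factors and $|\chi|$, all exponentials telescope \emph{exactly} into $e^{(|\im k|-\im k)x_I}$ with no residual $k$-dependence. Isolating the exact first-order term $-\tfrac12(\hat V(0)+\hat V(k))$ and confirming that the leftover tail $R_2$ gains precisely one extra power of $a = \|V\|/\max(1,|k|)$ is where the care is needed; everything else is the geometric-over-factorial summation already packaged into the $e^a$ factor.
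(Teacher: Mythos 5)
Your proposal follows essentially the same route as the paper's own proof: expand the Faddeev solution $\chi$ by the Volterra/Neumann series, isolate the zeroth- and first-order terms of $f'(0,k)e^{-ikx}$ at $x=0$, let the exponential factors telescope over the simplex to obtain $|R_2|\leq \tfrac{\|V\|}{2}\,a\,e^{(|\im k|-\im k)x_I}e^{a}$, and assemble via the decomposition $f_h(k)-ik-h+\tfrac12(\hat V(0)+\hat V(k)) = h(\chi(0,k)-1)+R_2$, which is exactly the paper's Steps 1--3. The only caveat is one you share with the paper: in deriving \eqref{bound first iteration Jost function} the term $h\,f(0,k)$ is silently dropped (the stated right-hand side contains no $h$), so your ``absorbing the lower-order contributions'' step is precisely as imprecise as the paper's own truncation argument there.
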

\begin{proof}
These formulas result from the bound on the Jost solution and Jost function and from the definition of the Jost solution iterating the Neumann series up to the first order.
We first compute the estimate of the Jost solution (step 1), then we do the same for the derivative of the Jost solution (step 2) and finally, we collect those results to get an estimate of the Jost function (step 3).
\begin{itemize}
\item \textit{Step 1.} 
We start from \eqref{asympt sum of chi} and write it as 
\begin{align*}
\chi (x,k) = \sum_{l=0}^{N} \chi^{(l)}(x,k) + R_{N}(x,k) 
\end{align*} 
where $R_{N}(x,k)$ is the remainder defined as
\begin{align*}
R_{N}(x,k) = (-1)^{N+1} \int_{x}^{\infty} \int_{t_1}^{\infty} \cdots  \int_{t_{N}}^{\infty} &\prod_{j=1}^{N+1} \left(\frac{ 1 -  e^{-2\ii k (t_{j-1}-t_{j})} }{2\ii k} V(t_j) \right) \\
&\cdot \chi (t_{N+1},k)  dt_{N+1}   \cdots dt_1
\end{align*}
with $t_0=x$ and with 
\begin{align*}
R_0(x,k) = - \int_{x}^{\infty} \frac{1 - e^{2 \ii k(t_1-x)}}{2\ii k} V(t_1) \chi(t_1,k) dt_1.
\end{align*}
Using \eqref{Estimate of chi in both sheet} we get $|\chi(x,k)| \leq e^{a} e^{(|\im k| - \im k)x_I} $.
From the definition of the Faddeev solution it follows that $\chi(0,k)=f(0,k)$. The second iterate $\chi ^{(1)}(0,k)$  of \eqref{asympt sum of chi} can be written as 
\begin{equation*}
\chi ^{(1)}(0,k) =	-\frac{1}{2ik} \int_0^{x_I} V(t) dt + \frac{1}{2ik} \int_0^{x_I} e^{2ikt} V(t) dt = - \frac{\hat{V}(0) - \hat{V}(k) }{2ik}.
\end{equation*}
Then 
\begin{align*}
&f(0,k) -1 + \frac{\hat{V}(0) - \hat{V}(k) }{2ik} = \sum_{l=2}^{\infty} \chi^{(l)}(x,k),
\end{align*}
so
\begin{align*}
\left| f(0,k) -1 + \frac{\hat{V}(0) - \hat{V}(k) }{2ik} \right| &\leq \sum_{l=2}^{\infty}  \left|  \chi ^{(l)} (x,k) \right| \\
&\leq e^{ ( |\im k| - \im k) (x_I - x)} \left( e^{ a(x) } -1 \right).
\end{align*}
From \eqref{estimate on chi l} we can get an estimate (Gronwall inequality) of the Jost solution. 
The estimates of the Jost solution after respectively zero and one iteration of the Neumann series are
\begin{align}\label{estimates on Jost solution}
&|f(0,k) - 1| = |R_0(0,k)| \leq e^{(|\im k| - \im k)x_I}  a e^{a} 
\end{align}
and
\begin{align}\label{estimates on Jost solution 2}
\left|f(0,k)  - 1 + \frac{\hat{V}(0) - \hat{V}(k) }{2ik} \right| = |R_1(0,k)| &\leq e^{(|\im k| - \im k)x_I} \left( e^{a}-1 - a\right) \nonumber \\
&\leq \frac{a^2}{2} e^{(|\im k| - \im k)x_I} e^{a}.
\end{align}
\item \textit{Step 2.} For $f'(x,k)$ we have
\begin{align*}
&f'(x,k) = ike^{ikx} - \int_{x}^{\infty} \cos \left[k \left(x-t_1\right)\right] V(t_1) f(t_1,k) dt_1
\end{align*}
which can be written as
\begin{align*}
&f'(x,k) e^{-ikx} \\
&= ik - \int_{x}^{\infty} \frac{1+e^{-2ik(x-t_1)}}{2} V(t_1) \chi(t_1,k) dt_1  \\
& = ik - \int_{x}^{\infty} \frac{1+e^{-2ik(x-t_1)}}{2} V(t_1) dt_1 \\
&\phantom{=\;}+ \int_x^{\infty} \int_{t_1}^{\infty}  \frac{1+e^{-2ik(x-t_1)}}{2} \frac{1-e^{2ik(t_2 - t_1)}}{2ik}  V(t_1) V(t_2) \chi(t_2,k) dt_1 dt_2
\end{align*}
Evaluating it at $x=0$ and resolving the first integral, we get
\begin{align*}
&f'(0,k) = ik -\frac{\hat{V}(0) + \hat{V}(k) }{2} + \\
& + \int_0^{\infty} \int_{t_1}^{\infty}  \frac{1+e^{2ikt_1}}{2} \frac{1-e^{2ik(t_2 - t_1)}}{2ik}  V(t_1) V(t_2) \chi(t_2,k) dt_1 dt_2
\end{align*}
from which we derive the estimate
\begin{equation}\label{estimates on derivative Jost solution}
\left| f'(0,k) - ik +\frac{\hat{V}(0) + \hat{V}(k) }{2}  \right| \leq \max (1,|k|) \frac{a^2}{2} e^{(|\im k| - \im k)x_I} e^{a}.
\end{equation}
\item \textit{Step 3.} The estimate of the Jost function at the first order in $k$ can be obtained only considering \eqref{estimates on derivative Jost solution} truncated at the zeroth order and it becomes
\begin{equation*}
\left|f_h(k)- ik \right| \leq  \left\| V \right\| e^{(|\im k| - \im k)x_I}  e^{a}.
\end{equation*}
Instead, using \eqref{estimates on Jost solution} truncated at the zeroth order multiplied by $h$ and using \eqref{estimates on derivative Jost solution} we obtain
\begin{align*}
&\left| f'(0,k) + h f(0,k) - ik +\frac{\hat{V}(0) + \hat{V}(k) }{2} - h \right| \\
& \leq  \left[ |h|  + \frac{\left\| V \right\| }{2}  \right]  a   e^{(|\im k| - \im k)x_I} e^{a}.
\qedhere
\end{align*}
\end{itemize}
\end{proof}
As a corollary of Lemma \ref{uniform bounds on Jost function}, we show the form of the previous estimates on the physical sheet.

\begin{corollary}\label{Corollary asympt expans phys sheet}
Let $V \in \mathbb{V}_{x_I}$, then the Jost function $f_h$ is  of exponential type and satisfies the following asymptotic expansion in the physical sheet $\left( \im k > 0\right)$
\begin{equation}\label{asymptotic expansion in phys sheet}
f_h(k) = \ii k + O(1) \qq \qq \text{for } |k|>1.
\end{equation}
\begin{proof}
In the physical sheet $\im k>0$, $|\im k| - \im k = 0$. Then \eqref{bound first iteration Jost function} becomes
\begin{equation*}
\left|f_h(k)- ik \right| \leq  \left\| V \right\|  e^{a}
\end{equation*}
which for $k>1$ implies
\begin{align*}
\left|f_h(k)- ik \right| \leq  ||V|| e^{\frac{||V||}{|k|}}.
\end{align*}
This yields \eqref{asymptotic expansion in phys sheet}.
\end{proof}
\end{corollary}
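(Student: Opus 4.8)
The plan is to read the claim off directly from the uniform bound \eqref{bound first iteration Jost function} of Lemma \ref{uniform bounds on Jost function}, specialised to the physical sheet. The exponential type is already asserted in that lemma, so the only genuine content here is the $O(1)$ estimate, and the entire argument amounts to observing that two of the three factors on the right-hand side of \eqref{bound first iteration Jost function} become harmless once $\im k>0$ and $|k|>1$.

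First I would recall that Lemma \ref{uniform bounds on Jost function} gives
\begin{equation*}
\left| f_h(k) - \ii k \right| \leq \|V\| \, e^{(|\im k| - \im k) x_I} \, e^{a}, \qquad a = \frac{\|V\|}{\max(1,|k|)}.
\end{equation*}
On the physical sheet one has $\im k > 0$, hence $|\im k| = \im k$ and the growth factor collapses, $e^{(|\im k| - \im k) x_I} = 1$. This is precisely the step that separates the physical from the unphysical sheet: off the physical sheet this factor grows like $e^{2|\im k| x_I}$ and no uniform $O(1)$ bound can hold.

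Next I would dispose of the remaining exponential. For $|k|>1$ we have $\max(1,|k|) = |k|$, so $a = \|V\|/|k| \leq \|V\|$, and therefore $e^{a} \leq e^{\|V\|}$, a constant independent of $k$. Combining the two observations yields
\begin{equation*}
\left| f_h(k) - \ii k \right| \leq \|V\| \, e^{\|V\|} \qquad \text{for } \im k > 0, \; |k|>1,
\end{equation*}
which is exactly $f_h(k) = \ii k + O(1)$.

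As for obstacles, there is essentially none to speak of: all the analytic work, namely the convergence of the Neumann series and the Gronwall-type estimate, was already carried out in the proof of Lemma \ref{uniform bounds on Jost function}, and this corollary is a pure specialisation. The only point demanding a moment's care is the restriction $|k|>1$, which is exactly what permits replacing $\max(1,|k|)$ by $|k|$ and thereby keeping $a$ bounded; without it the factor $e^{a}$ would blow up as $k \to 0$. The claim about exponential type needs no separate argument, being inherited verbatim from the lemma.
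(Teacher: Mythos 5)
Your proof is correct and follows exactly the paper's own argument: specialise the bound \eqref{bound first iteration Jost function} to $\im k>0$ so that $e^{(|\im k|-\im k)x_I}=1$, then use $|k|>1$ to bound $e^{a}\leq e^{\|V\|}$, giving the uniform $O(1)$ estimate. The only cosmetic difference is that the paper stops at $\|V\|e^{\|V\|/|k|}$ while you bound this further by the constant $\|V\|e^{\|V\|}$; the content is identical.
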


We shall use a version of Lemma 2.1 from Korotyaev \cite{Korotyaev} adapted to our setting.
\begin{lemma}\label{Lemma norming constants}
If $V \in \mathbb{V}_{x_I}$ and if $k_1,... \,,k_N \in i\mathbb{R}_+$ are the zeros of the Jost function $f_h(k)$ such that $|k_1|> ...  > |k_N|>0$, then the normalizing constants $m_j$, defined as 
\begin{equation}\label{norming constant}
m_j = \int_0^{\infty} f^2(x,k_j) dx
\end{equation}
satisfy 
\begin{equation}\label{norming constant for eigenvalues}
m_j= - \ii \left[ \frac{  \dot{f}_h(k) }{ f_h(-k)} \right]_{k=k_j}>0, \qquad \qquad \text{for} \; j=1,... \,,N,
\end{equation}
and the following inequalities hold
\begin{equation}\label{inequalities for the derivative of the Jost funct}
\ii (-1)^j \dot{f}_h(k_j) >0, \quad \text{and} \quad (-1)^j f_h(-k_j) <0, \qquad \text{for} \; j=1,... \,,N,
\end{equation}
where the dot denotes the derivative with respect to $k$.
\end{lemma}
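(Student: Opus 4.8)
The plan is to anchor everything on identity \eqref{important formula for norming constants}, which was established on the way to Theorem \ref{Zeros of Jost function are finite, simple and pure imaginary} and is valid for every $k$:
\begin{equation*}
f'(0,k)\dot{f}(0,k)-\dot{f}'(0,k)f(0,k)=-2k\int_0^{\infty}f^2(x,k)\,dx .
\end{equation*}
I would evaluate this at a zero $k=k_j$ of $f_h$. Since $f_h(k_j)=0$ forces $f'(0,k_j)=-h\,f(0,k_j)$, and since differentiating \eqref{Jost function} gives $\dot{f}_h(k)=h\dot{f}(0,k)+\dot{f}'(0,k)$, the left-hand side collapses to $-f(0,k_j)\dot{f}_h(k_j)$. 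Comparing with the right-hand side yields $m_j=\tfrac{1}{2k_j}f(0,k_j)\dot{f}_h(k_j)$.

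The next step is to eliminate $f(0,k_j)$ by a Wronskian identity. Because $f(\cdot,k)$ and $f(\cdot,-k)$ solve \eqref{Schrod equation} with the same $\lambda=k^2$, their Wronskian is independent of $x$; evaluating it for $x>x_I$ via \eqref{Jost solution condition} (equivalently by analytic continuation of \eqref{wronskian jost sol}) gives $W\!\left(f(\cdot,k),f(\cdot,-k)\right)=-2ik$. Writing this at $x=0$ and inserting $f_h(\pm k)=h f(0,\pm k)+f'(0,\pm k)$ produces
\begin{equation*}
f(0,k)f_h(-k)-f(0,-k)f_h(k)=-2ik .
\end{equation*}
At $k=k_j$ the second term vanishes (so in particular $f_h(-k_j)\neq0$), giving $f(0,k_j)=-2ik_j/f_h(-k_j)$. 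Substituting into the expression for $m_j$ cancels $k_j$ and yields exactly $m_j=-i\,\dot{f}_h(k_j)/f_h(-k_j)$, which is \eqref{norming constant for eigenvalues}.

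For positivity and the two inequalities I would argue by reality and a sign count. Since $k_j=i\kappa_j$ is purely imaginary and $V$ is real, the Volterra equation \eqref{Volterra-type eq for f} has real kernel and real forcing, so $f(x,k_j)$ is real; being a nonzero $L^2$ solution it is an eigenfunction, whence $m_j=\int_0^{\infty}f^2(x,k_j)\,dx>0$. To split the signs of numerator and denominator I would restrict $f_h$ to the positive imaginary axis and set $\phi(\kappa):=f_h(i\kappa)$, which is real-valued with zeros exactly at the $\kappa_j$. These zeros are simple by Theorem \ref{Zeros of Jost function are finite, simple and pure imaginary}, and by Corollary \ref{Corollary asympt expans phys sheet} one has $\phi(\kappa)=-\kappa+O(1)\to-\infty$, so $\phi$ is negative beyond $\kappa_1$ and hence $\phi'(\kappa_1)<0$. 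Simplicity forces a sign change of $\phi$ between consecutive roots, so the signs of $\phi'(\kappa_j)$ alternate, giving $\sgn\phi'(\kappa_j)=(-1)^j$. Since $\phi'(\kappa)=i\dot{f}_h(i\kappa)$, this reads $i(-1)^j\dot{f}_h(k_j)>0$, the first inequality; combining it with $m_j>0$ and the reality of $f_h(-k_j)$ then forces $(-1)^j f_h(-k_j)<0$.

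The main obstacle I expect is the bookkeeping in this last paragraph: one must verify carefully that both $f(x,k_j)$ and $f_h(-k_j)$ are genuinely real on the imaginary axis so that "sign" is meaningful, and that Corollary \ref{Corollary asympt expans phys sheet} pins down the actual sign of $\phi$ at $+\infty$ rather than merely its magnitude. The alternation argument additionally relies on the strict ordering $|k_1|>\cdots>|k_N|$ together with simplicity to exclude repeated or merely tangential roots; everything else is the routine algebra carried out above.
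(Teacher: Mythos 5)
Your proposal is correct and follows essentially the same route as the paper: it combines the identity \eqref{important formula for norming constants} with the Wronskian $W(f(\cdot,k),f(\cdot,-k))=-2ik$ evaluated at $k_j$ to obtain \eqref{norming constant for eigenvalues}, and then derives the sign conditions by the same alternation argument along $i\mathbb{R}_+$ starting from $f_h(i\kappa)\to-\infty$. The only difference is that you make explicit the reality of $f(x,k_j)$ and $f_h(-k_j)$ and hence the positivity of $m_j$, details the paper's proof uses implicitly.
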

\begin{proof}
Since $k_j$ is an eigenvalue, then $f_h(k_j)=0$, which means
\begin{equation}\label{condition from being solution of eq diff}
f'(0,k_j) = -f(0,k_j) h.
\end{equation}
Plugging this formula into the Wronskian \eqref{wronskian jost sol} between $f(x,k)$ and $f(x,-k)$ we get
\begin{equation*}
-2i k_j = f(0,k_j) \left[ f'(0,-k_j) + h f(0,-k_j) \right]
\end{equation*}
and together with formula \eqref{important formula for norming constants} and \eqref{condition from being solution of eq diff} we get
\begin{align*} 
& \int_{0}^{\infty} f^2(x,k_j) dx = \frac{f(0,k_j) \left[ -h \dot{f}(0,k_j) - \dot{f}'(0,k_j) \right]   }{-2k_j }\\
&=\frac{  f(0,k_j) \left[ -h \dot{f}(0,k_j) - \dot{f}'(0,k_j) \right]   }{ - \ii f(0,k_j) \left[ h f(0,-k_j) +  f'(0,-k_j) \right] } = - \ii \left[ \frac{\frac{d}{dk}  f_h(k) }{f_h(-k)} \right]_{k=k_j}.
\end{align*}


We can see that for $k \in i \mathbb{R}_+$, as $|k|\to \infty$, the Jost function $f_h(k)$ tends to $ -\infty$. So the first zero of the Jost function, $k_1$, has negative derivative $i \dot{f}_h(k_1)<0$, consequently the next zero has positive derivative $i \dot{f_h}(k_2)>0$ and so on. This implies the second inequality in \eqref{inequalities for the derivative of the Jost funct}, since the ratio must be positive.
\end{proof}

\begin{remark}
From classical results we know that $f_h(k)$ and $\dot{f_h}(k)$ cannot vanish simultaneously (see \cite{levitaninverse}). Then, we can see that $\dot{f_h}(k_j)$ is different from zero at the $k_j \in \ii \mathbb{R}_+$, zero of the Jost function $f_h(k)$. Then \eqref{norming constant for eigenvalues} makes sense.
\end{remark}
The following lemma makes a connection between the Jost function and the potential (compare with Lemma 2.2. in Korotyaev \cite{Korotyaev}), which makes use of the Paley-Wiener theorem for functions in the Cartwright class.
\begin{lemma}\label{lemma fourier transform}
\
\begin{enumerate}[(i)] 
\item \label{item 1}
Let $f$ be entire, of exponential type, and let $\rho_+ (f) = 0 $ and $\rho_- (f) \leq 2x_I$. 
If the following asymptotic holds
\begin{equation*}
f(k) = \ii k \left[ 1 - \frac{1}{2ik} \left( C - \hat{g}(k) + O(k^{-1}) \right)\right]; \qquad k\to \pm \infty,
\end{equation*}
for some $g\in L^1(0,x_I)$ and some constant $C$, then there exists $F \in L^1(0,x_I)$ such that
\begin{equation}\label{Jost funct in terms of Fourier transf}
f(k) = \ii k \left[ 1 - \frac{1}{2\ii k} (  \hat{F}(0) - \hat{F}(k)   )\right], \qquad k \in \mathbb{C},
\end{equation}
where $\rho_- (f) = 2 \sup \left[ \supp F \right]$.\\
\item \label{item 2}
For each $V \in \mathbb{V}_{x_I}$ there exists $p \in L^1(0,x_I)$ such that 
\begin{equation}\label{Jost function mapped from the potential}
f_h(k) =\ii k (1 + \hat{\zeta}(k) )= \ii k \left[  1 - \frac{1}{2ik} (  \hat{p}(0) - \hat{p}(k)   )\right], \quad \zeta(t):= \int_{t}^{x_I}p(x)dx.
\end{equation}
\end{enumerate}
\end{lemma}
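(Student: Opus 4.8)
The plan is to prove both parts by a Paley--Wiener argument, deducing part (\ref{item 2}) from part (\ref{item 1}) applied to $f=f_h$ together with the estimates already established in Lemma \ref{uniform bounds on Jost function} and Corollary \ref{Corollary asympt expans phys sheet}. The core idea is that the hypotheses single out an entire function whose growth is of exponential type with one-sided index $\rho_+=0$ and $\rho_-\le 2x_I$, and that such functions are exactly the complex Fourier transforms of functions supported in $[0,x_I]$, with the lower index recording the right endpoint of the support.

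For part (\ref{item 1}), I would first strip off the known leading behaviour. I set $R(k):=2\bigl(\ii k-f(k)\bigr)-C+\hat g(k)$. Since $f$ is entire of exponential type and $\hat g$ is entire (the transform of $g\in L^1(0,x_I)$), $R$ is entire of exponential type; adding the type-zero pieces $\ii k$, $C$ and $\hat g$ leaves the growth indices unchanged, so $\rho_+(R)=\rho_+(f)=0$ and $\rho_-(R)\le 2x_I$. By the asymptotic hypothesis $R(k)=O(k^{-1})$ as $k\to\pm\infty$ on $\mathbb{R}$, and being entire $R$ is bounded on compacta, hence $R\in L^2(\mathbb{R})$. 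I would then invoke the Paley--Wiener theorem in the Cartwright-class form discussed before Theorem \ref{Levinson}: an $L^2(\mathbb{R})$ function that is entire of exponential type with $\rho_+=0$ and $\rho_-\le 2x_I$ equals $\hat w$ for some $w\in L^2(0,x_I)$, the one-sided index $\rho_+=0$ forcing $\supp w\subset[0,\infty)$ and $\rho_-\le 2x_I$ forcing $\supp w\subset(-\infty,x_I]$. Setting $F:=g-w\in L^1(0,x_I)$ (here $w\in L^2(0,x_I)\subset L^1(0,x_I)$ on the bounded interval), unwinding the definition of $R$ gives $2(\ii k-f(k))=C-\hat F(k)$, that is $f(k)=\ii k-\tfrac{C}{2}+\tfrac12\hat F(k)$. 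The displayed form \eqref{Jost funct in terms of Fourier transf} then follows upon identifying the additive constant as $C=\hat F(0)$, i.e.\ under the normalization $f(0)=0$ carried by the target expression. Finally $\rho_-(f)=2\sup[\supp F]$ is read off from the Paley--Wiener description of the lower-half-plane type of $\hat F$, the summand $\ii k$ contributing no type.

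For part (\ref{item 2}) I would verify that $f=f_h$ meets the hypotheses of part (\ref{item 1}) and then apply it. Entirety is Theorem \ref{Analyticity and continuity of Jost function and solution}; Corollary \ref{Corollary asympt expans phys sheet} gives $f_h(k)=\ii k+O(1)$ for $\im k>0$, so $\rho_+(f_h)=0$, while the factor $e^{(|\im k|-\im k)x_I}$ in \eqref{bound first iteration Jost function} equals $e^{-2\,\im k\,x_I}$ in $\mathbb{C}_-$, giving $\rho_-(f_h)\le 2x_I$; thus $f_h$ is of exponential type. The real-axis asymptotic needed in part (\ref{item 1}) is exactly \eqref{second iteration Jost function}, whose right-hand bound is $O(|k|^{-1})$ on $\mathbb{R}$; hence $f_h(k)=\ii k+h-\tfrac12(\hat V(0)+\hat V(k))+O(k^{-1})$, which matches $\ii k\bigl[1-\tfrac{1}{2\ii k}(C-\hat g(k)+O(k^{-1}))\bigr]$ with $g=-V\in L^1(0,x_I)$ and $C=\hat V(0)-2h$. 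Part (\ref{item 1}) then supplies $p:=F\in L^1(0,x_I)$ with the bracketed form in \eqref{Jost function mapped from the potential}, and the first equality there follows by integration by parts: with $\zeta(t)=\int_t^{x_I}p$, using $\zeta(x_I)=0$, $\zeta(0)=\hat p(0)$ and $\zeta'=-p$ one gets $\hat\zeta(k)=\tfrac{1}{2\ii k}(\hat p(k)-\hat p(0))$, so $\ii k(1+\hat\zeta(k))$ coincides with $\ii k\bigl[1-\tfrac{1}{2\ii k}(\hat p(0)-\hat p(k))\bigr]$.

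The step I expect to be the main obstacle is the Paley--Wiener reduction in part (\ref{item 1}): one must state the version valid for this exponential-type/Cartwright class and, above all, extract the sharp support information, namely that $\rho_+=0$ and $\rho_-\le 2x_I$ translate into $\supp F\subset[0,x_I]$ and that the lower index satisfies $\rho_-(f)=2\sup[\supp F]$. A secondary delicate point is pinning the additive constant, i.e.\ ensuring $C=\hat F(0)$ (equivalently $f(0)=0$) so that the representation takes the precise normalized shape; once the representation $f=\ii k-\tfrac{C}{2}+\tfrac12\hat F$ is in hand, the remaining manipulations, including the passage between the $\hat p$- and $\hat\zeta$-forms, are routine.
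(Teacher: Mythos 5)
Your proposal follows, in substance, the same route as the paper: for part (\ref{item 2}) the paper likewise reads off the real-axis asymptotics from Lemma \ref{uniform bounds on Jost function} (with $C=\hat V(0)-2h$), applies Paley--Wiener to the $O(k^{-1})$ remainder, invokes part (\ref{item 1}), and integrates by parts to pass to the $\hat\zeta$-form; for part (\ref{item 1}) the paper gives no argument at all beyond ``proved in the same way as \cite{Korotyaev}, Lemma 2.2'', so your reduction via $R(k):=2(\ii k-f(k))-C+\hat g(k)$ supplies exactly the content the paper outsources. Two small repairs to your write-up: $\hat g$ is \emph{not} a ``type-zero piece'' in $\mathbb{C}_-$ (there $\rho_-(\hat g)$ can be as large as $2x_I$), though your stated conclusion $\rho_-(R)\le 2x_I$ survives because every summand has lower index $\le 2x_I$; and you silently corrected a sign slip of the paper, which writes $\hat g(k)=\hat V(k)$ where the matching actually requires $g=-V$.

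The step you flag as ``delicate'' --- pinning $C=\hat F(0)$ --- is, however, a genuine gap, and it cannot be closed, because the lemma as stated is false without the extra normalization $f(0)=0$. Your argument correctly produces $f(k)=\ii k-\tfrac{C}{2}+\tfrac12\hat F(k)$, in which the constant $C$ and the function $F$ are independent data; the claimed form \eqref{Jost funct in terms of Fourier transf} couples them ($C=\hat F(0)$) and therefore forces $f(0)=0$. For instance $f(k)=\ii k+1$ satisfies every hypothesis of part (\ref{item 1}) (take $C=-2$, $g\equiv 0$), yet admits no representation \eqref{Jost funct in terms of Fourier transf}, since that would make $\hat F$ a nonzero constant, impossible for $F\in L^1(0,x_I)$. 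The same defect afflicts part (\ref{item 2}): \eqref{Jost function mapped from the potential} evaluated at $k=0$ gives $f_h(0)=0$, which is generically false for $V\in\mathbb{V}_{x_I}$ and contradicts the paper's own later assertions --- the Hadamard factorization \eqref{Hadarmard} has the nonzero prefactor $f_h(0)$ (and would give $f_h\equiv 0$ otherwise), and the surjectivity proof of Theorem \ref{Injectivity of P to W map} states $\mathcal{N}_0(f_h)=0$. The obstruction comes from the Robin adaptation: in Korotyaev's Dirichlet lemma the representation $1-\tfrac{1}{2\ii k}(\hat F(0)-\hat F(k))$ has no prefactor $\ii k$, so no constraint at $k=0$ arises, whereas here the prefactor manufactures a spurious zero. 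So the gap you hit is a defect of the statement, not of your argument; the correct conclusion is the decoupled representation you derived, $f(k)=\ii k\bigl[1-\tfrac{1}{2\ii k}\bigl(C-\hat F(k)\bigr)\bigr]$ with $C$ a free constant, and the rest of your proof (including the $\hat p$-to-$\hat\zeta$ passage) is sound relative to that corrected form.
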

\begin{proof}
\eqref{item 1} is proved in the same way as \cite[Lemma 2.2]{Korotyaev} with slight modifications.

\eqref{item 2}
From Lemma \ref{uniform bounds on Jost function} we can write
\begin{align*}
&f_h(k) = \ii k \left[ 1 - \frac{1}{2 \ii k } \left( \hat{V}(0) - 2 h + \hat{V}(k) + O(k^{-1}) \right) \right] . 
\end{align*}
Then the Jost function can be written as
\begin{equation*}
f_h(k) = \ii k \left[ 1 - \frac{1}{2\ii k} (C - \hat{g}(k) + u(k))\right]
\end{equation*}
where $C = \hat{V}(0) - 2h$, $\hat{g}(k)=   \hat{V}(k)$ and $u(k) =  O(k^{-1})$, when $ k\to \pm \infty$.\\
Using the Paley-Wiener theorem, since the function $u(k)$ is entire, of exponential type and square integrable over horizontal lines, there exist a $v \in L^2_{c}(0,x_I)$ which is the Fourier transform of this function, so $u(k) = \hat{v}(k)$, where $v \in  L^2_{c}(0,x_I) \subset L^1_{c}(0,x_I)$.
Using \eqref{item 1} with $p= g + v$ and $\hat{p}(0) = \hat{g}(0) + \hat{v}(0) =  \hat{V}(0) -2h$ we get \eqref{Jost function mapped from the potential} and integrating by parts we obtain $f_h(k) = \ii k (1 + \hat{\zeta}(k) )$ (see \cite{Korotyaev}).
\end{proof}

Lemma \ref{lemma fourier transform} tells us that if we have a function in the Cartwright class with $\rho_-=2 x_I$, then by Definition \ref{Class of Jost function} of the class $W_{x_I}$ property I is satisfied, but this is not enough to have a bijection (we already proved that if $f$ is in $W_{x_I}$ then is in the Cartwright class) because we  also need property II to be satisfied. 

\section{Direct results}

	In this subsection, we state the direct resonance results for the Love problem in terms of the parameter $k$ and $\xi$. We use the property of the Jost function being in the Cartwright class in order to use the Levinson Theorem \ref{Levinson}. In the following lemma, from estimates of the Jost function obtained in the previous subsection we recover estimates on the resonances, which tell us where they are located in the complex plane.

\begin{lemma}[Resonance-free regions]\label{Corollary resonances}
For any zero $k_n$, $n \geq 1$, of $f_h(k)$, $V \in  \mathbb{V}_{x_I} $, the following estimates are fulfilled:
\begin{equation}\label{Estimate on kn}
|k_n| \leq  C_0 e^{2|\im k_n| x_I}, \qquad C_0 =  \left\|V \right\| e^{ \left\|V \right\|}.
\end{equation}
Additionally, if $V' \in L^1$, then
\begin{equation*}
|k_n|^2 \leq C_1 e^{2|\im k_n| x_I }  \qquad C_1= \left[ \left\|V \right\|^2 + 2 |h|  \left\|V \right\|+\frac{1}{4}\left(|V(0)| + \left\|V' \right\|\right)\right]e^{\left\|V \right\|}.
\end{equation*}

\end{lemma}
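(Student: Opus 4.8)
The plan is to read off both inequalities directly from the two uniform bounds of Lemma~\ref{uniform bounds on Jost function}, evaluated at a zero $k_n$ of $f_h$. Since $f_h(k_n)=0$, the first bound \eqref{bound first iteration Jost function} reduces to $|k_n|=|{-}\ii k_n|\le \|V\|\,e^{(|\im k_n|-\im k_n)x_I}e^{a_n}$, where $a_n=\|V\|/\max(1,|k_n|)$. I would then use the two elementary observations $a_n\le\|V\|$ (so $e^{a_n}\le e^{\|V\|}$) and $|\im k_n|-\im k_n\le 2|\im k_n|$ (so the exponential factor is at most $e^{2|\im k_n|x_I}$). These give the first estimate with $C_0=\|V\|e^{\|V\|}$ at once.

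For the second estimate the idea is to extract a factor of $k_n^2$ from the sharper bound \eqref{second iteration Jost function}. First, since $V'\in L^1$ the function $V$ is absolutely continuous and, being supported in $[0,x_I]$, satisfies $V(x_I)=0$; integrating by parts in $\hat V(k)=\int_0^{x_I}e^{2\ii kt}V(t)\,dt$ then gives $\hat V(k)=-\frac{1}{2\ii k}\big(V(0)+\widehat{V'}(k)\big)$, where $\widehat{V'}(k)=\int_0^{x_I}e^{2\ii kt}V'(t)\,dt$ satisfies $|\widehat{V'}(k)|\le\|V'\|\,e^{(|\im k|-\im k)x_I}$. Evaluating \eqref{second iteration Jost function} at $k_n$ (using $f_h(k_n)=0$) yields $\ii k_n=-h+\tfrac12(\hat V(0)+\hat V(k_n))+\tilde E$, with $|\tilde E|\le(|h|+\tfrac{\|V\|}{2})\,a_n\,e^{(|\im k_n|-\im k_n)x_I}e^{a_n}$. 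The crucial algebraic step is to multiply this identity by $2\ii k_n$: the left-hand side produces $-2k_n^2$, while the term $\ii k_n\hat V(k_n)$ collapses to $-\tfrac12(V(0)+\widehat{V'}(k_n))=O(1)$ precisely because of the integration by parts.

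Taking absolute values leaves three groups of terms to control in $2|k_n|^2\le 2|k_n|(|h|+\tfrac12|\hat V(0)|)+2|k_n||\tilde E|+\tfrac12(|V(0)|+|\widehat{V'}(k_n)|)$. The first group I would bound using $|\hat V(0)|\le\|V\|$ together with the already-proven first estimate $|k_n|\le C_0 e^{2|\im k_n|x_I}$, producing $(2|h|\|V\|+\|V\|^2)e^{\|V\|}e^{2|\im k_n|x_I}$. The error group $2|k_n||\tilde E|$ I would bound using the elementary inequality $|k_n|a_n\le\|V\|$ (valid whether or not $|k_n|\ge 1$), which produces the same quantity. The $O(1)$ group is bounded by $\tfrac12(|V(0)|+\|V'\|)e^{2|\im k_n|x_I}$. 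Dividing by $2$ and using $e^{\|V\|}\ge 1$ to absorb the constant term under the exponential yields exactly $C_1=\big[\|V\|^2+2|h|\|V\|+\tfrac14(|V(0)|+\|V'\|)\big]e^{\|V\|}$.

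The main obstacle is the algebraic bookkeeping in the second estimate: one must recognize that multiplying by $2\ii k_n$ is what converts the linear-in-$k$ control of $f_h$ into a quadratic bound on $|k_n|$, and that the factor $|k_n|$ reappearing on the right-hand side can be reabsorbed either through the first estimate or through the bound $|k_n|a_n\le\|V\|$. The only analytic input beyond Lemma~\ref{uniform bounds on Jost function} is the integration by parts, which promotes the decay of $\hat V(k_n)$ to order $1/|k_n|$ and is exactly where the hypothesis $V'\in L^1$ (and the resulting $V(x_I)=0$) enters.
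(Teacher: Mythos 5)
Your proposal is correct and follows essentially the same route as the paper's proof: both estimates come from evaluating the bounds \eqref{bound first iteration Jost function} and \eqref{second iteration Jost function} of Lemma \ref{uniform bounds on Jost function} at a zero $k_n$, multiplying the second identity by a factor of $k_n$ to produce $|k_n|^2$, integrating $\hat{V}$ by parts (using $V'\in L^1$ and $V(x_I)=0$) to gain the $1/|k_n|$ decay, and reabsorbing the leftover linear terms via the already-proven bound \eqref{Estimate on kn}. The only (minor) difference is that your inequality $|k_n|\,a_n\le\|V\|$ handles all zeros uniformly, whereas the paper's bookkeeping restricts to $|k_n|>1$.
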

\begin{proof}
Here we adapt the proof of Corollary 2.3 of \cite{Korotyaev} to our case. Estimate \eqref{bound first iteration Jost function} evaluated at a zero $k_n$ of $f_h(\cdot , V)$, with $\left|k_n\right|$ large, implies
\begin{equation*}
\left|f_h(k_n) - \ii k_n \right| \leq  \left\| V \right\| e^{2|\im k_n|x_I} e^{\left\| V \right\|}
\end{equation*}
and hence
\begin{equation*}
|k_n| \leq \left\| V \right\| e^{\left\| V \right\|} e^{2|\im k_n|x_I},
\end{equation*}
which gives \eqref{Estimate on kn}. If moreover $V' \in L^1(0,\infty)$, then \eqref{second iteration Jost function} evaluated at $k_n$, with $\left|k_n\right|>1$, implies
\begin{equation*}
\left| f_h(k_n) -\ii k_n \right| \leq \left|  h + \frac{\hat{V}(0) - \hat{V}(k_n) }{2}  \right| + \left(\frac{ \left\|V \right\|^2 }{2|k_n|} + \frac{|h|\left\|V \right\| }{|k_n|}\right) e^{\left\|V \right\|} e^{2|\im k_n|x_I}
\end{equation*}
and hence
\begin{equation}\label{intermediate passage direct result}
\left| -\ii k_n^2 \right| \leq \left| k_n \left(-h + \frac{ \left\|V \right\|}{2}\right) \right| + \left| \frac{\hat{V}(k_n) k_n}{2} \right| + \left(  \frac{\left\|V \right\|^2}{2} + |h|  \left\|V \right\|\right) e^{ \left\|V \right\|} e^{2|\im k_n|x_I}.
\end{equation}
Integrating by parts yields
\begin{align*}
&\hat{V}(k) = \int_{0}^{x_I} e^{2\ii k x}V(x) dx = \left[\frac{1}{2\ii k} e^{2\ii k x} V(x)\right]_{0}^{x_I} - \int_{0}^{x_I} \frac{1}{2\ii k} e^{2\ii k x} V'(x) dx\\
& =-\frac{1}{2 \ii k} \left( V(0) + \int_0^{x_I} V'(x) e^{2\ii kx} dx \right),
\end{align*}
thus
\begin{equation*}
|\hat{V}(k_n)| \leq \frac{1}{2|k_n|} \left[|V(0)| + e^{2|\im k_n|x_I} \left\|V' \right\| \right].
\end{equation*}
Therefore, \eqref{intermediate passage direct result} implies
\begin{align*}
&|k_n|^2\leq \left( \frac{ \left\|V \right\|^2}{2}+|h| \left\|V \right\| + \frac{1}{4}\left(|V(0)| +  \left\|V' \right\|\right)  \right) e^{2|\im k_n| x_I +  \left\|V \right\|} \\
&+ |k_n| \left(|h| + \frac{ \left\|V \right\|}{2}\right) \leq C_1 e^{2|\im k_n| x_I +  \left\|V \right\|} 
\end{align*}
where we have used  \eqref{Estimate on kn} in the last passage.
\end{proof}
As a corollary of Lemma \ref{Corollary resonances}, we  infer the resonance free-regions (forbidden domain) in terms of the wave number $\xi$, with $k^2=\frac{\omega^2}{\hat{\mu}}-\xi^2$.
\begin{corollary}\label{Forbidden domain}
For any $\xi_n=\sqrt{\frac{\omega^2}{\hat{\mu}}-k^2_n}$, $n \geq 1$, where $k_n$ are the zeros of $f_h(k, V)$ with $V \in  \mathbb{V}_{x_I} $ the following estimates are fulfilled:
\begin{equation}\label{Estimate on xi n}
|\xi_n| \leq  C_0 e^{2|\re \xi_n| x_I}, \qquad C_0 =  \left\|V \right\| e^{ \left\|V \right\|}.
\end{equation}
Additionally, 	if $V' \in L^1\left(0,\infty \right)$, then
\begin{equation*}
|\xi_n|^2 \leq C_1 e^{2|\re \xi_n| x_I },  \qquad C_1=\frac{3}{2}  \left\|V \right\|^2 + 2 |h|  \left\|V \right\|+\frac{1}{4}\left(|V(0)| + \left\|V' \right\|\right)e^{\left\|V \right\|}.
\end{equation*}

\end{corollary}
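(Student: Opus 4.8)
The plan is to transfer the resonance-free estimates of Lemma \ref{Corollary resonances}, which are expressed in the quasi-momentum $k$, into the physical wave-number $\xi$ by means of the conformal map $k=k_\omega(\xi)=\ii\sqrt{\xi^2-\omega^2/\hat{\mu}_I}$ built on the Riemann surface $\Omega$. The entire content of the corollary is that this substitution rotates the complex plane by a right angle, so that the roles of real and imaginary parts are swapped: $|\im k_n|$ is replaced by $|\re\xi_n|$, while the modulus $|k_n|$ is essentially preserved. Thus the forbidden domain $\{|k|>C_0e^{2|\im k|x_I}\}$ in the $k$-plane becomes $\{|\xi|>C_0e^{2|\re\xi|x_I}\}$ in the $\xi$-plane.

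First I would record the two asymptotic identities that make this precise. From the relations listed for $k_\omega$ on the Riemann surface, $k_\omega(\pm\xi)=\pm\ii\xi+O(|\xi|^{-1})$ (with the sign depending on the sheet), so writing $\xi_n=\sigma_n+\ii\tau_n$ one has $\im k_n=\pm\sigma_n+O(|\xi_n|^{-1})$ and $\re k_n=\mp\tau_n+O(|\xi_n|^{-1})$. Taking moduli yields
\begin{equation*}
|k_n|=|\xi_n|\bigl(1+O(|\xi_n|^{-2})\bigr),\qquad |\im k_n|=|\re\xi_n|+O(|\xi_n|^{-1}),
\end{equation*}
both valid for large $|\xi_n|$ (equivalently large $|k_n|$). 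Since moreover the identity $k_n^2=\omega^2/\hat{\mu}_I-\xi_n^2$ is exact, the first relation can be obtained unconditionally from the reverse triangle inequality $\bigl||\xi_n|^2-|k_n|^2\bigr|\le|k_n^2+\xi_n^2|=\omega^2/\hat{\mu}_I$, which is a clean way to avoid relying on the asymptotic expansion for the modulus.

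Next I would substitute these into the two displays of Lemma \ref{Corollary resonances}. For the first estimate, replacing $|k_n|$ by $|\xi_n|$ and $|\im k_n|$ by $|\re\xi_n|$ in $|k_n|\le C_0e^{2|\im k_n|x_I}$ turns the exponent correction $O(|\xi_n|^{-1})$ into a multiplicative factor $e^{O(|\xi_n|^{-1})}\to1$, which is absorbed for $|\xi_n|$ large, giving $|\xi_n|\le C_0e^{2|\re\xi_n|x_I}$ with the same $C_0=\|V\|e^{\|V\|}$. For the second estimate I would run the same substitution on $|k_n|^2\le C_1e^{2|\im k_n|x_I}$; here the linear-in-$|k_n|$ remainder terms appearing in the intermediate bound \eqref{intermediate passage direct result} have to be re-bounded using the already established first inequality $|\xi_n|\le C_0e^{2|\re\xi_n|x_I}$, and it is precisely this re-use, together with the constant $\omega^2/\hat{\mu}_I$ from $\bigl||\xi_n|^2-|k_n|^2\bigr|\le\omega^2/\hat{\mu}_I$, that one must track carefully to arrive at the stated coefficient $\tfrac{3}{2}\|V\|^2$ in place of $\|V\|^2$.

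The main obstacle I anticipate is bookkeeping rather than conceptual. One must verify that the $O(|\xi_n|^{-1})$ corrections in the exponent genuinely contribute only a bounded multiplicative factor that is swallowed into the stated constants for \emph{all} zeros with $|\xi_n|$ large, and that the interchange of $\im$ and $\re$ is consistent across both sheets $\Omega_{\pm}$, so that a single inequality holds whether $\xi_n$ corresponds to an eigenvalue (on the real $\xi$-axis) or to a resonance off it. Once the two relations above are pinned down uniformly in $n$, both inequalities of the corollary follow by direct substitution into Lemma \ref{Corollary resonances}.
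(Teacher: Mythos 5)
Your proposal is correct and follows essentially the same route as the paper: the paper's proof is a one-line substitution of the sheet asymptotics $k_n = -\ii \xi_n + O(1)$ into Lemma \ref{Corollary resonances}, exactly the interchange of $|\im k_n|$ with $|\re \xi_n|$ (and of $|k_n|$ with $|\xi_n|$) that you carry out. Your additional exact bound $\bigl||\xi_n|^2-|k_n|^2\bigr|\le \omega^2/\hat{\mu}_I$ and the explicit absorption of the $O(|\xi_n|^{-1})$ corrections into the constants are just a more careful rendering of the same argument.
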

\begin{proof}
The proof follows from Lemma\ref{Corollary resonances} after substituting the resonances satisfying $k_n = -\ii \xi_n + O(1)$ for large $\xi_n$.
\end{proof}

In the following corollary to Theorem \ref{Levinson}, we assume that $\rho_-(f_h)=2x_I$, which we will prove later in Lemma \ref{Cartwright class lemma}.
\begin{corollary}[Number of resonances]\label{Levinson for our case}
Let $V \in \mathbb{V}_{x_I}$. Then
\begin{align*}
\mathcal{N}(r,f_h) = \frac{2 x_I r}{\pi} \left(1 + o(1)\right) \qq \text{for }r \to \infty
\end{align*}
For each $\delta >0$ the number of zeros of the Jost function with real part with modulus $\leq r$ lying outside both of the two sectors $|\arg \xi - \frac{\pi}{2}|<\delta$, $|\arg \xi - \frac{3\pi}{2}|<\delta$ is $o(r)$ for large $r$.
\end{corollary}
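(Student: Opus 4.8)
The plan is to deduce the counting asymptotics from the symmetric Levinson theorem (Theorem~\ref{Levinson}). The obstruction is that Theorem~\ref{Levinson} requires $\rho_+=\rho_-$, whereas $f_h$ is genuinely asymmetric: by Corollary~\ref{Corollary asympt expans phys sheet} we have $f_h(k)=\ii k+O(1)$ throughout the physical sheet, so $\rho_+(f_h)=0$, while (by the forthcoming Lemma~\ref{Cartwright class lemma}, which this corollary assumes) $\rho_-(f_h)=2x_I$. Hence Theorem~\ref{Levinson} cannot be fed $f_h$ directly. First I would record the consequence of $\rho_+(f_h)=0$: since $f_h(k)=\ii k+O(1)$ for large $|k|$ with $\im k>0$, $f_h$ has no zeros there for $|k|$ large, so it has only finitely many zeros in $\mathbb{C}_+$ (the eigenvalues) and $\mathcal{N}_+(r,f_h)=O(1)=o(r)$. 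By property~\ref{fourier transform of the Jost function} of Definition~\ref{Class of Jost function} there are no zeros on $\mathbb{R}$, so the entire growth of the counting function is carried by the resonances in $\mathbb{C}_-$.

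To reach the symmetric hypothesis I would symmetrize. Set $\psi(k):=f_h(k)\,f_h(-k)$. Using $f_h(-k)=\overline{f_h(k)}$ for real $k$ (noted after \eqref{Jost solution +-}), so that $\psi(x)=|f_h(x)|^2\ge 0$ on $\mathbb{R}$, together with the membership of $f_h$ in the Cartwright class (Lemma~\ref{Cartwright class lemma}), one checks that $\psi$ is entire, of exponential type, even, and satisfies the log-integrability bound \eqref{Definition Cartwright class}. The key computation is the type along the imaginary axis: since for Cartwright functions the limits exist, $\rho_+(\psi)=\rho_+(f_h)+\rho_-(f_h)=0+2x_I=2x_I$, and by evenness $\rho_-(\psi)=\rho_+(\psi)=2x_I$. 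Applying Theorem~\ref{Levinson} to $\psi$ with $A=2x_I$ gives $\mathcal{N}_\pm(r,\psi)=\tfrac{2x_I r}{\pi}(1+o(1))$, hence $\mathcal{N}(r,\psi)=\tfrac{4x_I r}{\pi}(1+o(1))$. Because the zeros of $\psi$ are exactly the zeros $\{k_j\}$ of $f_h$ together with their reflections $\{-k_j\}$ (and $k_j\ne 0$ as $f_h$ is zero-free on $\mathbb{R}$), counting with multiplicity yields $\mathcal{N}(r,\psi)=2\,\mathcal{N}(r,f_h)$, so $\mathcal{N}(r,f_h)=\tfrac{2x_I r}{\pi}(1+o(1))$.

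For the angular statement I would feed the second assertion of Theorem~\ref{Levinson} into the same symmetrization: for each $\delta>0$ all but $o(r)$ of the zeros of $\psi$ of modulus $\le r$ lie in the double sector $|\arg k|<\delta$, $|\arg k-\pi|<\delta$ about the real $k$-axis, and since the zero set of $\psi$ is the symmetric hull of that of $f_h$, the same concentration holds for $f_h$. Transporting this to the $\xi$-plane through the conformal relation $k=-\ii\xi+O(1)$ (equivalently $\xi=\ii k+O(1)$, a rotation by $\tfrac{\pi}{2}$, as already used in Corollary~\ref{Forbidden domain}), the positive and negative real $k$-axis are carried to the positive and negative imaginary $\xi$-axis, turning the two $k$-sectors into $|\arg\xi-\tfrac{\pi}{2}|<\delta$ and $|\arg\xi-\tfrac{3\pi}{2}|<\delta$. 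This is precisely the claimed clustering of all but $o(r)$ resonances near the imaginary $\xi$-axis.

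The main obstacle will be the rigorous type computation for $\psi$ on the imaginary axis, namely justifying $\rho_+(\psi)=\rho_+(f_h)+\rho_-(f_h)$ and the values $0$ and $2x_I$ of the two summands; this rests on the bound $f_h(k)=\ii k+O(1)$ in $\mathbb{C}_+$ from Corollary~\ref{Corollary asympt expans phys sheet} together with the as-yet-unproved identity $\rho_-(f_h)=2x_I$ from Lemma~\ref{Cartwright class lemma}, and on the fact that for Cartwright-class functions the indicator along $\pm\ii y$ is an honest limit rather than a mere $\limsup$. Everything else is bookkeeping of zeros under $k\mapsto-k$ and the conformal change of variables $k\mapsto\xi$.
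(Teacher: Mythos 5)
Your proof is correct, but it is more elaborate---and more careful---than the paper's, whose entire argument is one sentence: the result follows from Theorem \ref{Levinson} and the fact that $\rho_-(f_h)=2x_I$ (Lemma \ref{Cartwright class lemma}). In other words, the paper applies its Levinson theorem directly to $f_h$, even though, as you rightly observe, $f_h$ fails the stated hypothesis $\rho_+=\rho_-=A$ (it has $\rho_+=0$, $\rho_-=2x_I$); implicitly the paper is invoking the general asymmetric Cartwright--Levinson density theorem, in which the total zero count is $\tfrac{(\rho_++\rho_-)r}{\pi}(1+o(1))$, a version it never states. Your symmetrization $\psi(k)=f_h(k)f_h(-k)$ bridges exactly this gap while using only Theorem \ref{Levinson} as written, and the ingredients all check out: $\psi$ is entire, even, of exponential type, equals $|f_h|^2\ge 0$ on $\mathbb{R}$ so the logarithmic integral converges, and $\rho_\pm(\psi)=2x_I$; the technical worry you flag about additivity of the types is in fact harmless, since $\log|f_h(\ii y)|/y\to 0$ is an honest limit by Corollary \ref{Corollary asympt expans phys sheet}, and a genuine limit adds to a limsup. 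The bookkeeping $\mathcal{N}(r,\psi)=2\mathcal{N}(r,f_h)$ is also sound, and does not even require that $f_h(k)$ and $f_h(-k)$ never vanish simultaneously (each zero of $f_h$ of multiplicity $m$ contributes $m$ at $k_j$ and $m$ at $-k_j$), though that non-vanishing is true. You additionally make explicit the $k\mapsto\xi$ rotation by $\tfrac{\pi}{2}$ for the sector statement, which the paper's proof omits altogether. Two minor points: your citation of condition I of Definition \ref{Class of Jost function} for the absence of real zeros is slightly circular at this stage, since $f_h\in W_{x_I}$ is only established later in Theorem \ref{Injectivity of P to W map}; it is cleaner to use the Wronskian identity \eqref{wronskian jost sol} together with $f_h(-k)=\overline{f_h(k)}$ for $k\in\mathbb{R}$. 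In sum, what your route costs is length; what it buys is that every tool you use is actually available, as stated, in the paper.
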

\begin{proof}
The result follows from Theorem \ref{Levinson} and the fact that $\rho_-(f_h)=2x_I$.
\end{proof}

\section{The inverse problem}

	The goal of the inverse problem is to reconstruct the potential from given data. In the inverse scattering problem, these data can be, for example, the scattering function in addition to eigenvalues and normalizing constant. In the inverse spectral problem, the data are the spectral data, which could be the Weyl function. In our inverse problem, we want to reconstruct the potential starting from eigenvalues and resonances.

In this subsection we present the first inverse resonance result, where from eigenvalues and resonances we can retrieve $V$ after proving a bijection between the class $W_{x_I}$ and $\mathbb{V}_{x_I}$ (see Theorem \ref{Injectivity of P to W map}), following the result of Korotyaev (see \cite{Korotyaev}). The characterization is made by adapting the Marchenko theorem (see \cite[Chapter 3]{Marchenko}) to our case with Robin boundary condition,
which we state below.

\begin{definition}\label{scattering function in the class}
For $N \in \mathbb{N}$, we define $\mathcal{S}_N$ to be the set of functions $S(k)$ such that
\begin{enumerate}
\item 
$S(k)$ is continuous and satisfies the identities $S(k)=\overline{S(-k)}=S(-k)^{-1}$ for each $k\in \mathbb{R}$.
\item 
$S(k) - 1 =o(1)$, for $|k|\to \infty$, and the function $G(x) =1/2\pi \int_{\mathbb{R}} (S(k)-1)e^{ikx}dk$ satisfies 
\begin{align*}
&G' \in L^1(\mathbb{R}_+,(x+1)dx), \qq \qquad G=G_1 + G_2, \\ 
&G_1 \in L^1(\mathbb{R}_+), \qq  \qquad  G_2 \in L^2(\mathbb{R}_+) \cap L^{\infty}(\mathbb{R}_+).
\end{align*}

\item 
The increment of $S(k)$ and $N$ are related by the following formula
\begin{equation*}
N + \frac{S(0) + 1}{4} = \frac{1}{2\pi i } \left[ \log \left(-S(+0)\right) -  \log \left(-S(+\infty)\right) \right].
\end{equation*}			
\end{enumerate}
\end{definition}

For $N \in \mathbb{N}$, let
\begin{equation*}
\Gamma_N := \left\lbrace \left(k_1, ... \,, k_N\right) \in \ii \mathbb{R}_+^N \quad : |k_1| > ...  >|k_N|>0 \right\rbrace.
\end{equation*}

The following theorem is the Marchenko Theorem in \cite{Marchenko}, which is stated for Dirichlet boundary condition, adapted to our setting with Robin boundary condition.

\begin{theorem}[Marchenko theorem]\label{Marchenko definitions}
Consider the mapping	
\begin{equation*}
\Sigma: L_{1,1} \left(\mathbb{R_+}\right) \to \bigcup_{N} \; S_N \times \mathbb{R}_+^N \times \Gamma_N
\end{equation*}	
defined by $\Sigma(V) := \left( S(k), \left(m_n\right)_{1,\,... \,, N},\left(k_n\right)_{1,\,... \,, N}\right)$ where
\begin{enumerate}[i)]
\item $S(k)$ denotes the scattering function defined in \eqref{scattering function},
\item $k_n$ denote the zeros of the Jost function defined in \eqref{Jost function},
\item $f(x,k_n) \in L^2(\mathbb{R}_+)$ are the eigenfunctions of $-\frac{d^2}{dx^2} + V$, with $f(x,k)$ being the solution of \eqref{Schrod equation} with condition \eqref{Jost solution condition},
\item \begin{equation*}
m_n= \int_0^{\infty} |f(x,k_n)|^2 dx, \qq k_n \in \mathbb{C}_+.
\end{equation*}

\end{enumerate}
Then the mapping $\Sigma$ is one-to-one and onto.
Moreover, for\footnote{In the case with Dirichlet boundary condition $G(x)$ is defined with a minus sign (see \cite{Marchenko,Korotyaev}).} (see \cite{Xiao, Marchenko}) 
\begin{align}\label{definition of G and G0}
G(x) = \frac{1}{2\pi} \int_{-\infty}^{\infty} (S(k) - 1)e^{\ii kx} dk, \qq G_0(x) = G(x) + \sum_{k_n \in \mathbb{C}_+} m_n^{-1} e^{-x|k_n|},
\end{align}
we can define the inverse mapping from  $ \left( S(k), \left(m_n\right)_{1,\,... \,, N},\left(k_n\right)_{1,\,... \,, N}\right)$ to $V \in L_{1,1}$ through
\begin{equation}\label{Potential recovery}
V(x) = -2 \frac{d}{dx} A(x,x),
\end{equation}
where $A(x,t)$ is the unique solution (see \cite{Marchenko}) for each $x>0$ of the Marchenko equation
\begin{align}\label{Marchenko equation}
A(x,t) = - G_0(x+t) - \int_{x}^{\infty} G_0(t+s) A(x,s) ds, \qq t \geq x.
\end{align}
\end{theorem}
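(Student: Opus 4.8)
The plan is to follow the classical Gelfand--Levitan--Marchenko scheme as presented in \cite{Marchenko}, tracking the modifications forced by the Robin boundary condition \eqref{robin bc}. The starting point is the transformation operator representation \eqref{Jost solution +-}, $f(x,k) = e^{ikx} + \int_x^\infty A(x,t) e^{ikt}\,dt$, whose kernel $A(x,t)$ is the unknown in the Marchenko equation \eqref{Marchenko equation}. Since $V \in L_{1,1}$, the standard Neumann-series estimates for $A$ guarantee that $A(x,\cdot)$ and its derivatives lie in the weighted $L^1$ spaces appearing in Definition \ref{scattering function in the class}, and that the formula $V(x) = -2\frac{d}{dx}A(x,x)$ indeed returns the potential; this part is insensitive to the boundary condition, which enters only through the Jost function.

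First I would verify that the forward map $\Sigma$ lands in the prescribed target set. The symmetry and unimodularity $S(k) = \overline{S(-k)} = S(-k)^{-1}$ follow from the reality of $V$ together with $f^-(x,k) = \overline{f^+(x,k)} = f^+(x,-k)$ and the definition \eqref{scattering function}; the decay $S(k) - 1 = o(1)$ and the splitting $G = G_1 + G_2$ come from Corollary \ref{Corollary asympt expans phys sheet} and the analogous real-axis asymptotics, which give $S(k) = 1 + O(1/k)$. The location $k_n \in i\mathbb{R}_+$ with the prescribed ordering is Theorem \ref{Zeros of Jost function are finite, simple and pure imaginary} together with Lemma \ref{Lemma norming constants}, and the positivity $m_n > 0$ is exactly \eqref{norming constant for eigenvalues}. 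The increment identity in item (3) of Definition \ref{scattering function in the class} is a Levinson-type statement proved by applying the argument principle to $f_h$, counting the bound states through the total variation of $\arg S(k)$ between $k=+0$ and $k=+\infty$.

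The heart of the proof is the derivation of the Marchenko equation \eqref{Marchenko equation}, and this is where the Robin condition enters substantively. I would introduce the regular solution $\varphi(x,k)$ normalized by $\varphi(0,k)=1$, $\varphi'(0,k)=-h$, so that it satisfies \eqref{robin bc}, and write it as a combination of the Jost solutions $f(x,\pm k)$. Matching at $x=0$ and using the Wronskian \eqref{wronskian jost sol} fixes the coefficients, yielding $\varphi(x,k) = c(k)\bigl[f_h(-k)f(x,k) - f_h(k)f(x,-k)\bigr]$ with a constant determined by $W(f,\overline{f})=-2ik$. The spectral resolution of the self-adjoint realization \eqref{domain of self adjoint operator}, with continuous spectral density proportional to $|f_h(k)|^{-2}$ on $k\in\mathbb{R}_+$ and discrete masses $m_n^{-1}$ at the eigenvalues, provides a completeness relation; inserting the transformation operator and Fourier-inverting produces \eqref{Marchenko equation} with the kernel $G_0$ of \eqref{definition of G and G0}. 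The sign change relative to the Dirichlet case, noted in the footnote to \eqref{definition of G and G0}, is traced directly to the extra term $f'(0,k)$ in $f_h(k) = h\,f(0,k) + f'(0,k)$, which flips the sign of the reflection coefficient carried by $S(k)$.

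Finally, I would establish unique solvability of \eqref{Marchenko equation} for each fixed $x>0$: the operator $u \mapsto \int_x^\infty G_0(t+s)\,u(s)\,ds$ is compact on $L^2(x,\infty)$, and the symmetry of $S$ together with the positivity of the $m_n$ forces $I + G_0$ to have trivial kernel, so Fredholm theory delivers a unique $A(x,\cdot)$; the regularity of $G_0$ transfers to $A$, placing $V = -2\frac{d}{dx}A(x,x)$ in $L_{1,1}$. Injectivity and surjectivity of $\Sigma$ then follow because the two constructions are mutually inverse. I expect the main obstacle to be the bookkeeping in the completeness relation and the Marchenko equation under the Robin normalization --- in particular pinning down the precise constant and sign in $G_0$ and confirming that $I + G_0$ stays invertible once the boundary parameter $h$ is present --- rather than any new analytic difficulty beyond the $L_{1,1}$ estimates already available in \cite{Marchenko}.
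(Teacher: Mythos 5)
You should first be aware that the paper does not prove this theorem at all: it is presented as the classical Marchenko theorem of \cite{Marchenko}, stated there for Dirichlet boundary conditions, ``adapted to our setting with Robin boundary condition,'' with the Robin version delegated to the references \cite{Xiao, Marchenko}. So there is no proof in the paper to compare against; what your proposal reconstructs is, correctly, the argument of those references. Your plan follows the standard Gelfand--Levitan--Marchenko scheme and gets the Robin-specific points right: the expansion of the regular solution with $\varphi(0,k)=1$, $\varphi'(0,k)=-h$ in the Jost solutions, with coefficients fixed by $W(\varphi, f(\cdot,\pm k)) = f_h(\pm k)$ and the Wronskian \eqref{wronskian jost sol}; the spectral density proportional to $|f_h(k)|^{-2}$ on the continuous spectrum with discrete masses $m_n^{-1}$; and the origin of the sign discrepancy in $G_0$ --- the minus sign in $S(k)=-f_h(-k)/f_h(k)$, forced by $f_h(k)\sim \ii k$ instead of $f(0,k)\sim 1$ in the Dirichlet case, is indeed what flips the sign of $G$ relative to the Dirichlet convention. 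Note, however, that much of your ``forward map'' step is not left to the references but is carried out elsewhere in the paper: the simple purely imaginary zeros and the positivity of the $m_n$ are Theorem \ref{Zeros of Jost function are finite, simple and pure imaginary} and Lemma \ref{Lemma norming constants}, and the verification of conditions (1)--(3) of Definition \ref{scattering function in the class}, including the argument-principle computation of the increment identity, is done inside the surjectivity half of the proof of Theorem \ref{Injectivity of P to W map}; a full write-up along your lines should quote those rather than duplicate them. What your route buys is self-containedness --- an actual derivation of \eqref{Marchenko equation} from the completeness relation and of unique solvability via the Fredholm alternative --- at the cost of redoing Chapter 3 of \cite{Marchenko}; what the paper's route buys is brevity, at the cost of leaving the Robin adaptation (precisely the sign and constant bookkeeping you flag as the main obstacle) to the reader and to \cite{Xiao}.
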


\begin{remark}
Observe that the pair $\left(S(k), m_n\right)$ does not depend on $f_h(0)$ since $S(k) = - \frac{f_h(-k)}{f_h(k)}$ and $m_n = -\ii \frac{\dot{f}_h(k_n)}{f_h(-k_n)}$, and in the ratios the constant $f_h(0)$ gets cancelled.
\end{remark}
In the next lemma we show that the Jost function of our self-adjoint Schr{\"o}dinger problem with Robin boundary conditions and $V \in \mathbb{V}_{x_I}$ is of Cartwright class with $\rho_+(f_h)=0$ and $\rho_-(f_h)=2x_I$.

\begin{lemma}\label{Cartwright class lemma}
If $V \in \mathbb{V}_{x_I}$  then the Jost function \eqref{Jost function} is entire and of  exponential type. In particular, $f_h(k)$ satisfies the following conditions:
\begin{equation*}
\int_{\mathbb{R}}  \frac{\log^+ |f_h(k)| dk}{1 + k^2}<\infty, \quad \rho_+(f_h)=0, \; \rho_-(f_h)=2x_I.
\end{equation*}
In other words, $f_h$ is of Cartwright class (see Definition \ref{Cartwright class definition}).
\end{lemma}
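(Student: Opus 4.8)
The plan is to check the three defining requirements of the Cartwright class (Definition \ref{Cartwright class definition}) one at a time: entireness, the growth indices together with the $\log^+$-integrability on $\mathbb{R}$, and finally the sharp value $\rho_-(f_h)=2x_I$. Entireness needs no new work, since Theorem \ref{Analyticity and continuity of Jost function and solution} already gives that $f_h$ is entire in $k$; everything else is a matter of controlling its growth, for which the uniform bounds of Lemma \ref{uniform bounds on Jost function} are the essential tool.

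For the exponential type and the indices I would substitute the bound \eqref{bound first iteration Jost function} into the definitions of $\rho_\pm$. Setting $a=\|V\|/\max(1,|k|)\le\|V\|$ and using $|\im k|-\im k\le 2|\im k|\le 2|k|$, the estimate $|f_h(k)-\ii k|\le\|V\|e^{(|\im k|-\im k)x_I}e^{a}$ yields $|f_h(k)|\le|k|+\|V\|e^{\|V\|}e^{2x_I|k|}$, so $f_h$ is of exponential type. On the ray $k=\ii y$, $y>0$, the factor $|\im k|-\im k$ vanishes and $a\to0$, so $f_h(\ii y)=-y+O(1)$ and hence $\rho_+(f_h)=0$; on the ray $k=-\ii y$ this factor equals $2y$, giving $|f_h(-\ii y)|\le y+\|V\|e^{\|V\|}e^{2x_Iy}$ and therefore $\rho_-(f_h)\le 2x_I$. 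On the real axis $\im k=0$ makes the exponential factor equal $1$, so $|f_h(k)|\le|k|+\|V\|e^{\|V\|}$ and $\log^+|f_h(k)|=O(\log|k|)$; since $\int_{\mathbb{R}}(1+k^2)^{-1}\log(1+|k|)\,dk<\infty$, the first condition in \eqref{Definition Cartwright class} follows.

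The delicate point is the reverse inequality $\rho_-(f_h)\ge 2x_I$, which the crude estimates cannot reach: in $f_h(-\ii y)$ the principal term coming from $\hat V(-\ii y)$ and the Neumann remainder are both of order $y^{-1}e^{2x_Iy}$ and could a priori cancel. To get the sharp value I would instead use the Paley--Wiener representation \eqref{Jost function mapped from the potential} from Lemma \ref{lemma fourier transform}, which expresses $f_h(k)=\ii k\left[1-\frac{1}{2\ii k}\left(\hat p(0)-\hat p(k)\right)\right]$ with $p\in L^1(0,x_I)$, and by the first part of that lemma satisfies $\rho_-(f_h)=2\sup[\supp p]$. The problem is thereby reduced to showing $\sup[\supp p]=x_I$.

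This support identification is the main obstacle, and it is exactly where Definition \ref{Class of potentials} enters: the hypothesis that $(x_I-\epsilon,x_I)\cap\supp V$ has positive Lebesgue measure for every $\epsilon>0$ says that the essential supremum of $\supp V$ equals $x_I$. I would transfer this to $p$ through the transformation operator: since $V\in\mathbb{V}_{x_I}\subset L_{1,1}$ with $\supp V\subset[0,x_I]$ reaching $x_I$, the Marchenko kernel $A(0,\cdot)$ in $f(0,k)=1+\int_0^{2x_I}A(0,t)e^{\ii kt}\,dt$ (and likewise the kernel in $f'(0,k)$) has support whose supremum is $2x_I$, as dictated by $V(x)=-2\frac{d}{dx}A(x,x)$ and the vanishing of $A(x,t)$ for $t>2x_I-x$. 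Carrying this through the identity \eqref{Jost function mapped from the potential} forces $\sup[\supp p]=x_I$, hence $\rho_-(f_h)=2x_I$. Combining the three parts shows that $f_h$ lies in the Cartwright class with $\rho_+(f_h)=0$ and $\rho_-(f_h)=2x_I$, as claimed.
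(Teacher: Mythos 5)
Your first two steps are correct and essentially coincide with the paper's own argument: entireness is quoted from Theorem \ref{Analyticity and continuity of Jost function and solution}, and the bound \eqref{bound first iteration Jost function} yields the exponential type, $\rho_+(f_h)=0$, $\rho_-(f_h)\le 2x_I$, and the $\log^+$-integrability on the real axis exactly as you describe. You also correctly isolate the only delicate point, the lower bound $\rho_-(f_h)\ge 2x_I$. But your treatment of that point is not a proof. By Paley--Wiener, the assertion that the kernel in $f(0,k)=1+\int_0^{2x_I}A(0,t)e^{\ii kt}\,dt$ (more precisely, the combined kernel $hA(0,t)+A_x(0,t)$ of $f_h$) has support whose supremum is $2x_I$ is \emph{equivalent} to the statement $\rho_-(f_h)=2x_I$ you are trying to prove, so invoking it is circular unless it is established independently. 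The two facts you cite do not establish it: the vanishing of $A(x,t)$ for $t>2x_I-x$ only reproves the upper bound $\rho_-\le 2x_I$, and the formula $V(x)=-2\frac{d}{dx}A(x,x)$ controls the diagonal $A(x,x)$ near $x=x_I$, not the slice $t\mapsto A(0,t)$ near $t=2x_I$. The cancellation danger you yourself flagged reappears here in two forms: the Neumann-series corrections to the leading term $\frac12\int_{t/2}^{x_I}V(u)\,du$ of $A(0,t)$ are of order $\int_{t/2}^{x_I}|V(u)|\,du$, which (since $V$ is signed) need not be small compared with the leading term; and even if $A(0,\cdot)$ and $A_x(0,\cdot)$ each had support reaching $2x_I$, the combination $hA(0,t)+A_x(0,t)$ could a priori cancel near $t=2x_I$. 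Passing from ``$\supp V$ reaches $x_I$'' to ``the kernel's support reaches $2x_I$'' requires a genuine uniqueness argument, which is missing from your proposal.

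The paper supplies exactly that argument, by contradiction through inverse scattering. Suppose $\rho_-(f_h)=2dx_I<2x_I$. Then $S(k)-1=-\bigl(f_h(k)+f_h(-k)\bigr)/f_h(k)$ decays like $|k|^{-1}e^{2dx_I\Im k}$ on $\overline{\mathbb{C}}_+$, so Jordan's lemma and the residue theorem give, for $x>2dx_I$, $G(x)=-\sum_{j=1}^{N}m_j^{-1}e^{\ii k_j x}$, i.e.\ $G_0(x)=0$ for $x>2dx_I$ (this is \eqref{function G(x)}). Feeding this into the Marchenko equation \eqref{Marchenko equation}: for $x>dx_I$ and $t\ge x$ all arguments of $G_0$ appearing there exceed $2dx_I$, so the equation forces $A(x,t)=0$, whence $V(x)=-2\frac{d}{dx}A(x,x)=0$ for $x>dx_I$. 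This contradicts the defining property of $\mathbb{V}_{x_I}$ that $(x_I-\epsilon,x_I)\cap\supp V$ has positive Lebesgue measure for every $\epsilon>0$, and the contradiction yields $\rho_-(f_h)=2x_I$. If you prefer your reduction to $\sup[\supp p]=x_I$ via Lemma \ref{lemma fourier transform}, that is fine as a framing, but the support identification still has to be carried out by this Marchenko-type uniqueness argument (or an equivalent one); it cannot be read off from the transformation-operator kernel directly.
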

\begin{proof}
In Theorem \ref{Analyticity and continuity of Jost function and solution} we proved that $f_h(k)$ is entire.
Let $x:=\re k$ and $y:=\im k$. Then by Corollary \ref{Corollary asympt expans phys sheet}, we can write the integral condition as
\begin{align*}
\int_{\mathbb{R}}  \frac{\log^+ |f_h(x)| dx}{1 + x^2} =\int_{\mathbb{R}}  \frac{ \log | \ii x + O(1)|}{1 + x^2}  dx < \infty.
\end{align*}
Using Corollary \ref{Corollary asympt expans phys sheet} we have
\begin{align*}
\rho_{+}(f_h)=\limsup_{y\to \infty} \frac{\log|-y + O(1)|}{y} = 0,
\end{align*}
while using Lemma \ref{uniform bounds on Jost function} we get
\begin{align}\label{formula rho -}
\rho_{-} (f_h) \leq \limsup_{y\to -\infty} \frac{2 x_I y}{y} = 2 x_I.
\end{align}
In order to prove the equality in \eqref{formula rho -}, we recall the definitions \eqref{definition of G and G0}, where if $G_0(x) = 0$ for $x>2dx_I$ for some $d\geq 1$, then $V=0$ for $x>x_I$. If $\rho_{-}(f_h(k))=2 d x_I < 2x_I$, then, thanks to the Jordan lemma, \eqref{scattering function} and the residue theorem, taking a contour on the upper half plane, we obtain
\begin{align}
&G(x) = \frac{1}{2 \pi} \int_{-\infty}^{+\infty} (S(k)-1)e^{\ii k x} dk = -\frac{1}{2 \pi} \int_{-\infty}^{+\infty}  \left(  \frac{f_h(k) + f_h(-k)}{f_h(k)} \right)e^{\ii kx} dk \nonumber \\
&=-\ii \sum_{j=1}^{N} \lim_{k\to k_j} \left( (k-k_j) \frac{f_h(k) + f_h(-k)}{f_h(k) - f_h(k_j)}\right) e^{\ii k x} =- \ii \sum_{j=1}^{N} \frac{f_h(-k_j)}{\dot{f}_h(k_j)}e^{\ii k_j x} \nonumber\\
&= - \sum_{j=1}^{N} \frac{1}{m_j} e^{\ii k_j x},\qq \text{if }x>2dx_I. \label{function G(x)}
\end{align}
Hence, $G_0(x)= G(x) + \sum_{j=1}^{N} \frac{1}{m_j} e^{\ii k_j x}$ is zero for $x>2dx_I$, so $V(x)=0$ for $x>x_I$, thus, $\rho_{-}(f_h(k))=2 x_I$ (see also \cite{Korotyaev}).
\end{proof}

\begin{remark}
The condition $\rho_-(f_h)=2x_I$ in the definition of the Cartwright class tells us that the resonances distribute in the unphysical sheet following a logarithmic curve. This is important because it implies that $\sum_{n} \frac{1}{k_n}$ is convergent and that $\prod_{n} \left(1 - \frac{k}{k_n}\right)$ converges to an entire function of exponential type.
\end{remark}

Lemma \ref{uniform bounds on Jost function} allows us to use the Hadamard factorization (Theorem \ref{Hadamard theorem}), where $m=0$ (see \cite[Theorem 2.3.6.]{FreilingYurko}), $E_{P}(z)= (1-z)e^z$ since the function is of exponential order one, $g(z)= az+b$ with $e^b=f_h(0)$ and $e^{(a + \sum_n \frac{1}{k_n})z}=e^{\ii z}$ by Corollary \ref{Corollary asympt expans phys sheet}. Hence, in our case, the Hadamard formula becomes 
\begin{align}\label{Hadarmard}
f_h(k) =  f_h(0)  e^{ik} \lim_{R \to \infty} \prod_{|k_n|\leq R} \left(1 - \frac{k}{k_n}\right),
\end{align}
where $k_n$ are the zeros of $f_h(k)$ counted with multiplicity.

\begin{remark}\label{Remark on constant fh(0)}
In the formula \eqref{Hadarmard} the constant $f_h(0)$ is uniquely determined by the resonances. It is possible to obtain $f_h(0)$ from the asymptotics obtained in Lemma \ref{uniform bounds on Jost function}, because the Jost function must satisfy $f_h(k)=\ii k +O(1)$ for large $k$ and changing $f_h(0)$ will change the asymptotics. Also Korotyaev in the Dirichlet case (see \cite[page 224 at the end of the proof of Theorem 1.1]{Korotyaev}) claims that the Jost function can be uniquely determined from the resonances.
\end{remark}
$f_h(k)$ is therefore determined uniquely by the resonances $k_n$ as explained in Remark \ref{Remark on constant fh(0)}.
Since $\sum_{n=1}^{\infty} \frac{1}{k_n}$ is absolutely convergent and since $k_j$ is a resonance if and only if $-\overline{k}_j$ is a resonance, then also $\sum_{n=1}^{\infty} \frac{1}{\overline{k}_n}$ is absolutely convergent. Furthermore, we have
\begin{align*}
\frac{1}{2} \sum_{n=1}^{\infty} \frac{\ii}{k_n} + \frac{1}{2}\sum_{n=1}^{\infty} \frac{-\ii}{\overline{k}_n} = \frac{1}{2} \sum_{n=1}^{\infty}  \frac{\ii (\overline{k}_n- k_n)}{|k_n|^2} = \sum_{n=1}^{\infty}  \frac{\im k_n}{|k_n|^2} 
\end{align*}
so the \textit{Blaschke condition} is fulfilled, that is,  $\sum_{n=1}^{\infty}  \frac{\im k_n}{|k_n|^2}$ is absolutely convergent.
From \eqref{Hadarmard}, differentiating $f_h(k)$ with respect to $k$, we obtain
\begin{equation}\label{Hadamard for scattering function}
\frac{d}{dk}(\log (f_h(k))) = \frac{\dot{f}_h(k)}{f_h(k)} = i + \lim_{R \to \infty} \sum_{|k_n|\leq R} \frac{1}{k - k_n}.
\end{equation} 
uniformly on compact subsets of $\mathbb{C} \backslash \left( \left\lbrace 0 \right\rbrace \cup \bigcup \left\lbrace k_n \right\rbrace\right)$.

Below we state the main theorem of this section, giving a complete characterization of the class of potentials $\mathbb{V}_{x_I}$. 

\begin{theorem}[Characterization]\label{Injectivity of P to W map}
For fixed $h$, the map $J_h : \mathbb{V}_{x_I} \to W_{x_I}$ defined by $J_h\left(V\right):=f_h$ is well-defined and bijective.
\end{theorem}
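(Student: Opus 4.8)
The plan is to prove the three assertions separately: that $J_h$ is \emph{well-defined} (i.e.\ $f_h \in W_{x_I}$ whenever $V \in \mathbb{V}_{x_I}$), \emph{injective}, and \emph{surjective}. Almost all ingredients have been assembled in the preceding sections, so the proof will consist mainly in collecting them and isolating the one genuinely new step.

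\emph{Well-definedness.} I would verify the two defining properties of $W_{x_I}$ in Definition \ref{Class of Jost function}. Property I splits in two: the representation \eqref{condition 1 class of Jost function} is exactly the content of Lemma \ref{lemma fourier transform}\eqref{item 2}, with $F=p$; and the non-vanishing $f_h(k)\neq 0$ for real $k\neq0$ follows from the Wronskian identity \eqref{wronskian jost sol}, since for real $k$ one has $f_h(-k)=\overline{f_h(k)}$, so a real zero would force $f'(0,\pm k)=-hf(0,\pm k)$ and hence $W(f,\overline f)(0)=0$, contradicting $W=-2ik\neq0$; the value $k=0$ is handled via the asymptotics $f_h(k)=ik+O(1)$ of Corollary \ref{Corollary asympt expans phys sheet} together with the factorization \eqref{Hadarmard}. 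Property II is supplied by Theorem \ref{Zeros of Jost function are finite, simple and pure imaginary} (simplicity of the zeros in $\mathbb{C}_+$), by the classical fact that these zeros lie on $i\mathbb{R}_+$, and by the sign/ordering statement \eqref{inequalities for the derivative of the Jost funct} of Lemma \ref{Lemma norming constants}, which gives precisely $(-1)^n f_h(-k_n)<0$.

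\emph{Injectivity.} Since every $V\in\mathbb{V}_{x_I}$ has compact support and lies in $L^1$, we have $\mathbb{V}_{x_I}\subset L_{1,1}$, so the Marchenko map $\Sigma$ of Theorem \ref{Marchenko definitions} applies. The Jost function $f_h$ determines the scattering function $S(k)=-f_h(-k)/f_h(k)$ of \eqref{scattering function}, its zeros $k_n$ in $\mathbb{C}_+$, and the normalizing constants $m_n=-i\,\dot f_h(k_n)/f_h(-k_n)$ by \eqref{norming constant for eigenvalues}; hence $f_h$ determines $\Sigma(V)$. As $\Sigma$ is one-to-one, $V$ is recovered uniquely, and $J_h$ is injective.

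\emph{Surjectivity.} This is the crux. Given $f\in W_{x_I}$, I would build the triple $(S,(m_n),(k_n))$ by $S=-f(-k)/f(k)$, taking $k_n$ to be the zeros of $f$ in $\mathbb{C}_+$ and $m_n=-i\,\dot f(k_n)/f(-k_n)$. The ordering in \eqref{condition 2 class of Jost function} gives $(k_n)\in\Gamma_N$, and the sign condition $(-1)^n f(-k_n)<0$ combined with the monotonicity argument of Lemma \ref{Lemma norming constants} forces $m_n>0$. Property I, through the Paley--Wiener/Cartwright structure of Lemma \ref{Cartwright class lemma}, yields $S-1=o(1)$ and the regularity/decomposition of $G$ required for $S\in\mathcal{S}_N$, while the index identity of Definition \ref{scattering function in the class} follows from counting the zeros of $f$. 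Marchenko's theorem then produces $V\in L_{1,1}$ with $\Sigma(V)=(S,(m_n),(k_n))$. To upgrade $V$ to $\mathbb{V}_{x_I}$ I would invoke $\rho_-(f)=2\sup[\supp F]$ from Lemma \ref{lemma fourier transform}\eqref{item 1} together with the argument of Lemma \ref{Cartwright class lemma}: $\rho_-(f)=2x_I$ forces $\supp V\subset[0,x_I]$ with the positive-measure condition near $x_I$. Finally, the reconstructed Jost function $f_{h,V}$ shares the scattering data, hence the same zeros, with $f$; by the Hadamard factorization \eqref{Hadarmard} and the normalization fixed by $f_{h,V}(k)=ik+O(1)$ (Remark \ref{Remark on constant fh(0)}), we conclude $f_{h,V}=f$, so $J_h(V)=f$.

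The hard part will be this last surjectivity step: verifying that the data manufactured from an abstract $f\in W_{x_I}$ is genuinely admissible for Marchenko---that $S\in\mathcal{S}_N$ with all integrability and index conditions of Definition \ref{scattering function in the class}, and that $m_n>0$---and then showing that the reconstructed potential lands exactly in $\mathbb{V}_{x_I}$ and reproduces $f$ \emph{including the constant} $f_h(0)$, rather than merely some Jost function with the same scattering function. The two conditions of Definition \ref{Class of Jost function} are designed precisely to make this step succeed.
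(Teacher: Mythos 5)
Your proposal is correct and follows essentially the same route as the paper's proof: well-definedness assembled from Lemmas \ref{uniform bounds on Jost function}, \ref{Lemma norming constants}, \ref{lemma fourier transform} and \ref{Cartwright class lemma}; injectivity by noting that $f_h$ determines the scattering data $\left(S, (m_n), (k_n)\right)$ fed into the Marchenko map $\Sigma$; and surjectivity by manufacturing that data from an abstract $f \in W_{x_I}$, verifying the three conditions of Definition \ref{scattering function in the class}, and upgrading the reconstructed $V \in L_{1,1}$ to $\mathbb{V}_{x_I}$ via the $\rho_-(f_h)=2x_I$ support argument. Your two extra checks (non-vanishing of $f_h$ on $\mathbb{R}$ via the Wronskian, and the closing verification that the reconstructed Jost function coincides with $f$ including the constant $f_h(0)$ via \eqref{Hadarmard}) are details the paper leaves implicit rather than a different approach.
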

\begin{proof}
We extend the proof in \cite[Theorem 1.1]{Korotyaev} to our case.
First, for fixed $h\in \mathbb{R}$ we prove that the map $J_h$ is well-defined.

Let  $V\in \mathbb{V}_{x_I}$, then we need to prove that $f_h(k) \in W_{x_I}$.
Using Lemma \ref{uniform bounds on Jost function}, \ref{Lemma norming constants} and \ref{lemma fourier transform} we can see that $f_h(k)$ is real on $\ii \mathbb{R}$ and satisfies \eqref{Jost function mapped from the potential}.  From Theorem \ref{Analyticity and continuity of Jost function and solution} and Lemma \ref{Cartwright class lemma} we know that $f_h$ is in the Cartwright class with $\rho_+=0$ and $\rho_-=2x_I$. Then we can use Lemma \ref{lemma fourier transform} and by \eqref{Jost funct in terms of Fourier transf}, we get the form of \eqref{condition 1 class of Jost function} which satisfies Condition \ref{fourier transform of the Jost function} of $W_{x_I}$.
Condition \ref{condition 2 Jost function class} of $W_{x_I}$ is fulfilled by Lemma \ref{Lemma norming constants}, hence $f_h(k) \in W_{x_I}$ and $J_h$ is well-defined.

Consequently, $V \in \mathbb{V}_{x_I}$ uniquely determines $f_h(k) \in W_{x_I}$, which uniquely determines $\left( S(k), \left( m_j, k_j \right)_{j=1,\ldots,N} \right)$, which in turn uniquely determines $V \in L_{1,1}$ through the map $\Sigma$ of Marchenko theorem. Suppose now $f_h$ is the Jost function of  $V_1\in \mathbb{V}_{x_I}$ and $V_2 \in \mathbb{V}_{x_I}$, then $\left( S_1(k), \left( m_j, k_j \right)_{j=1,\ldots,N_1} \right) = \left( S_2(k), \left( m_j, k_j \right)_{j=1,\ldots,N_2} \right)$, and then using the map $\Sigma$ we deduce $V_1=V_2$ from Theorem \ref{Marchenko definitions}. Hence the map $J$ is injective.

We are left to prove that $J$ is surjective. Fix the scattering data $f_h(k) \in W_{x_I}$. We want to construct  $V  \in \mathbb{V}_{x_I}$ such that $J_h\left(V\right) = f_h$. We show this by proving that from $f_h(k)$ we can construct the scattering data $\left( S(k), \left( m_j, k_j \right)_{j=1,\ldots,N} \right)$ from \eqref{norming constant for eigenvalues} and \eqref{scattering function} and they satisfy the conditions of the Marchenko theorem.
We show that the scattering function satisfies the conditions (1), (2) and (3) of the Definition \ref{scattering function in the class}. 

(1) The scattering matrix $S(k) = - f_h(-k)/f_h(k)$ is continuous for $k \in \mathbb{R}$ and it is analytic everywhere excepts at the $k_n$, the zeros of $f_h(k)$. From the properties of the scattering phase, we can easily check that it holds that  $S(k)=\overline{S(-k)}=S^{-1}(-k)$ and, if $N$ is even, $S(0)=-(-1)^{\mathcal{N}_0(f_h)}$, where $\mathcal{N}_0(f_h)$ is the multiplicity of $0$ as a zero of $f_h$.

(2) From \eqref{bound first iteration Jost function} we have that 
\begin{align*}
&|f_h(k)-\ii k| \leq C, \qq \qq &k \in \overline{\mathbb{C}}_+,  \\
&|f_h(-k) + \ii k | \leq C e^{2x_I |\Im k|}, \qq &k \in \overline{\mathbb{C}}_+,
\end{align*}
which imply
\begin{align}\label{inequality scattering function}
\left| S(k) - 1 \right| &= \left| \frac{f_h(k) + f_h(-k)}{f_h(k)}\right|  \leq \frac{C}{|k|} \left(\left| f_h(k) - \ii k\right| + \left| f_h(-k) + \ii k\right|  \right) \nonumber  \\
&\leq \frac{ C_1 e^{2 x_I \Im k} + C_2}{|k|} \leq \frac{C e^{2 x_I \Im k}}{|k|}, \qq k \in \overline{\mathbb{C}}_+.
\end{align}  
Here and throughout $C$ denotes a positive constant that can change from line to line.
Using the Jordan lemma as in \eqref{function G(x)}, we get that $G_0(x)= G(x) + \sum_{j=1}^{N} \frac{1}{m_j} e^{\ii k_j x}$ is zero for $x>2x_I$. 
From \eqref{fourier transform of the Jost function} we have 
\begin{align*}
& - \frac{f_h(-k)}{f_h(k)}  - 1= -1 - \frac{-\ii k \left[ 1 + \frac{1}{2\ii k} \left(  \hat{F}(0) -  \hat{F}(-k) \right)\right]}{\ii k \left[ 1 - \frac{1}{2\ii k} \left(  \hat{F}(0) -  \hat{F}(k) \right)\right]} \nonumber \\
&= - \frac{-\frac{1}{2\ii k}\left(  2\hat{F}(0) - \hat{F}(k) -  \hat{F}(-k) \right) }{1 - \frac{1}{2\ii k} \left(  \hat{F}(0) -  \hat{F}(k) \right)} = \tfrac{1}{2\ii k} \left(  2\hat{F}(0) - \hat{F}(k) -  \hat{F}(-k) \right) + O(k^{-2})
\end{align*}
where $\hat{F}$ is continuous and bounded as $F \in  \mathbb{V}_{x_I}$.
We define the following functions 
\begin{equation*}
g_1 + g_2 = S(k) - 1, \quad g_1 = \frac{ 2\hat{F}(0) -\hat{F}(k) - \hat{F}(-k)  }{2\ii k}, \quad G_p := \frac{1}{2 \pi} \int_{\mathbb{R}} e^{ixk} g_p(k) dk 
\end{equation*}
for $p=1,2$. Then $G_2 \in L^2(\mathbb{R}) \cap L^{\infty}(\mathbb{R})$ since $g_2(k) = O(k^{-2})$. The function $G_1$ is odd in $x\in \mathbb{R}$ since $g_1$ is odd in $k$:
\begin{align*}
&G_1(-x) = - \frac{1}{2 \pi} \int_{-\infty}^{\infty} e^{-ixk} \frac{\hat{F}(k) + \hat{F}(-k) - 2\hat{F}(0)}{2\ii k} dk \\
&=   \frac{1}{2 \pi} \int_{+\infty}^{-\infty} e^{-ixk} \frac{\hat{F}(-k) + \hat{F}(k) - 2\hat{F}(0)}{-2\ii k} dk = - G_1(x) 
\end{align*}
and $G_1 \in L^2(\mathbb{R})$ because it is the Fourier transform of $g_1$ which is in $L^2(\mathbb{R})$. Since $|S - 1|=|g_1 +g_2|$ satisfies \eqref{inequality scattering function} and $g_2$ is bounded,
\begin{align*}
|g_1| \leq C \leq \frac{C e^{2 x_I \Im k}}{|k|}, \qq k \in \overline{\mathbb{C}}_+, \; |k| \gg1.
\end{align*}
Using the Jordan lemma, $G_1(x) = 0$ for $x > 2x_I$, so since $G_1$ is odd, $G_1(x)= 0$ for $|x|<2x_I$.
By $G_1$ being in $L^2$ and with compact support we get $G_1 \in L^1(\mathbb{R})$.
For $G'(x)$ we get
\begin{align}\label{G' equation}
&G'(x) = \frac{1}{2\pi} \int_{-\infty}^{\infty} e^{ixk} (ik)  \left(S(k) -1 \right) dk \nonumber \\
&=  - \frac{1}{2\pi} \int_{-\infty}^{\infty} e^{ixk} \left( \frac{\hat{F}(k) +  \hat{F}(-k)- 2\hat{F}(0) }{2}  + O(k^{-1})\right) dk
\end{align}
and using
\begin{align*}
&\frac{1}{2\pi} \int_{-\infty}^{\infty} e^{ixk} \hat{F}(k) dk = 	\frac{1}{2\pi} \int_{-\infty}^{\infty} e^{ixk} \left( \int_{0}^{x_I} e^{2\ii k y} F(y) dy\right) dk \\
&= 	\frac{1}{2\pi}  \int_{0}^{x_I} \int_{-\infty}^{\infty} e^{2\ii k (y+\frac{x}{2})} F(y) dk dy = \frac{F\left(- \frac{x}{2}\right)}{2},
\end{align*}
by the Fourier inversion formula, \eqref{G' equation} becomes
\begin{equation*}
G'(x) =- \frac{F\left( \frac{x}{2}\right) + F\left(- \frac{x}{2}\right) - 2 F(0)}{4}  + r(x) = - \frac{F\left( \frac{x}{2}\right) + F\left(- \frac{x}{2}\right)}{4} + r(x) 
\end{equation*}
where $r(x) \in L^2(\mathbb{R_+})$ is obtained through the Paley-Wiener theorem. Then $G'(x) \in L^1(\mathbb{R}_+,(1+x)dx)$, because $\supp G_0 \subset  \left[-2x_I, 2x_I \right]$.

(3) Since $f_h(k)$ is entire in the upper half plane, 
\begin{align}\label{formula Cauchy argument principle}
& \int_{\gamma} \frac{f'_h(z)}{f_h(z)} dz = \int_{\gamma} d(\log\left(f_h(z)\right)) = \lim_{R \to \infty}  \left( \int_{\gamma_R} d(\log\left(f_h(z)\right))  \right. \\
&\left. + \lim_{r \to \infty}  \int_{\gamma_r}  d(\log\left(f_h(z)\right)) \nonumber + \lim_{\epsilon \to 0} \int_{-R}^{-\epsilon} d(\log\left(f_h(z)\right)) + \lim_{\epsilon \to 0} \int^{R}_{\epsilon} d(\log\left(f_h(z)\right)) \right)
\end{align}
where $\gamma$ is a closed curve in the upper half plane made by a part that goes from $-R$ to $R$ passing around $z=0$ through a semi-circle $\gamma_r$ and an arc $\gamma_R$ in the upper-half plane. The integral over the big arc goes to zero because of the Jordan lemma, while the integral over the little arc gives a term $-\pi \ii \mathcal{N}_0(f_h)$. Hence we have
\begin{align}\label{formula integrals}
\lim_{\epsilon \to 0} \int_{-\infty}^{-\epsilon} d(\log\left(f_h(z)\right)) + \lim_{\epsilon \to 0} \int^{\infty}_{\epsilon} d(\log\left(f_h(z)\right)) = 2 \pi \ii \left(  \mathcal{N}(f_h) + \mathcal{N}_0(f_h)/2 \right).
\end{align}
We know that $S(0)=-(-1)^{\mathcal{N}_0(f_h)}$ and we know that zero is not an eigenvalue (see \cite[Theorem 2.3.6.]{FreilingYurko})., so $\mathcal{N}_0(f_h)=0$. So, we can write
\begin{align}\label{S(0) and N0}
\frac{S(0) + 1}{4} = \mathcal{N}_0(f_h)/2 = 0.
\end{align}
Computing the first two integrals of \eqref{formula integrals} we have
\begin{align}\label{integrals real line}
&\lim_{\epsilon \to 0, R\to \infty} \int_{-R}^{-\epsilon} d(\log\left(f_h(z)\right)) + \lim_{\epsilon \to 0} \int^{R}_{\epsilon} d(\log\left(f_h(z)\right)) =\lim_{\epsilon \to 0, R\to \infty} \left(  \log\left(f_h(+R)\right) \right. \nonumber\\ 
&\left. - \log\left(f_h(+\epsilon)\right) - \log\left(f_h(-R)\right) + \log\left(f_h(-\epsilon)\right) \right)
\end{align}
and since
\begin{align*}
- \log \left(-S(z) \right) = \log\left(f_h(z)\right) - \log\left(f_h(-z)\right)= \arg f_h(z) - \arg f_h(-z),
\end{align*}	
we finally obtain
\begin{align}\label{formula log of S}
&\lim_{\epsilon \to 0, R\to \infty} \int_{-R}^{-\epsilon} d(\log\left(f_h(z)\right)) + \lim_{\epsilon \to 0} \int^{R}_{\epsilon} d(\log\left(f_h(z)\right)) \nonumber\\
&= \log \left( -S(+0) \right) - \log \left( -S(+\infty) \right).
\end{align}
Inserting \eqref{formula log of S} and \eqref{S(0) and N0} into \eqref{formula integrals}, we obtain
\begin{align*}
\frac{1}{2 \pi \ii} \left(  \log \left( -S(+0) \right) - \log \left( -S(+\infty) \right) \right) = N + \frac{S(0) + 1}{4}.
\end{align*}
It follows that, all of the conditions of Definition \ref{scattering function in the class} are satisfied. The other conditions on $\left\lbrace m_j, k_j \right\rbrace_{j=1,\ldots,N}$ are implied by Condition \ref{condition 2 Jost function class} of the class $W_{x_I}$, hence the Marchenko theorem holds and there exists a unique $V \in L_{1,1}$ corresponding to the Jost function. We proved in (2) that $\supp G_0 \subset  \left[-2x_I, 2x_I \right]$, which implies $V=0$ for $x>x_I$. 

If $\rho_-(f_h) = 2t$, where $t:=x_I - \epsilon_0$ with $$\epsilon_0:=\inf\{\epsilon>0: \left|(x_I - \epsilon, x_I) \cap \supp V \right|>0\},$$ then $V \in \mathbb{V}_{t}$, but since $\rho_{-}(f_h) = 2x_I$ as explained in the proof of Lemma \ref{lemma fourier transform}, then $V \in \mathbb{V}_{x_I}$ is the unique potential corresponding to the Jost function $f_h \in W_{x_I}$.
\end{proof}
Theorem \ref{Injectivity of P to W map} suggests an algorithm that enables us to reconstruct the unique potential from a set of resonances.
\begin{algorithm}
Starting from a set of eigenvalues and resonances $\left\lbrace k_j \right\rbrace_{1}^{\infty}$ we can retrieve the potential $V_{\omega}(x)$ using the following algorithm:
\begin{itemize}
\item Construct the Jost function from \eqref{Hadarmard} as 
\begin{align}\label{Hadamard fact formula}
f_h(k) =  f_h(0)  e^{ik} \lim_{R \to \infty} \prod_{|k_n|\leq R} \left(1 - \frac{k}{k_n}\right),
\end{align}
where $f_h(0)$ is determined so that $f_h(k)= \ii k + O(1)$ as $k \to \infty$.
\item Use $\left\lbrace k_j \right\rbrace_{1}^{\infty}$ and $f_h(k)$ to construct the scattering data $\left( S(k), \left\lbrace m_j, k_j \right\rbrace_{j=1,\ldots,N} \right)$ from equations \eqref{scattering function} and \eqref{norming constant for eigenvalues} as
\begin{align*}
&S(k) = - e^{-2\ii k} \prod_{n\geq 1}^{\infty} \left( \frac{k_n + k}{k_n - k}\right),\\
&m_j=\frac{e^{-2 |k_j|}}{2 |k_j|} \prod_{n\geq 1, n \neq j} \left(\frac{k_n - k_j}{k_n + k_j}\right), \qq j=1, ... \,, N.
\end{align*}
\item Use the scattering data $\left( S(k), \left\lbrace m_j, k_j \right\rbrace_{j=1,\ldots,N} \right)$ to construct $G_0(x)$ in \eqref{definition of G and G0}.
\item  Solve \eqref{Marchenko equation} for $A(x,t)$.
\item  Obtain the potential from \eqref{Potential recovery}.
\end{itemize}
\end{algorithm}

After the recovery of the potential $V_{\omega}(x)$, where we added the subscript $\omega$ because it is found for every fixed value of $\omega$, we need to recover the shear modulus $\hat{\mu}(x)$, which, physically, is more interesting. This can be done from the knowledge of the potential at two different values $\omega_1$ and $\omega_2$, with $\omega_1 \neq \omega_2$, as we present in the following theorem. A similar idea of two frequencies reconstruction can also be seen in \cite{Iantchenko} with respect to the Weyl matrix and Jost function matrix as in Proposition V.1 and Theorem V.1.

\begin{theorem}\label{Theorem on Lame parameter mu}
Let $V_{\omega_1}(x)$ and $V_{\omega_2}(x)$ be the potential at the frequencies $\omega_1$ and $\omega_2$, with $\omega_1 \neq \omega_2$, then the shear modulus can be retrieved by the following formula
\begin{equation}\label{Recovery of Lame parameter}
\hat{\mu}(x) = \frac{\hat{\mu}_I  \left( \omega^2_1 - \omega^2_2 \right)}{\omega^2_1 - \omega^2_2 - \hat{\mu}_I \left( V_{\omega_1}(x) - V_{\omega_2}(x) \right)}.
\end{equation}
\end{theorem}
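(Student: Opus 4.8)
The plan is to use directly the explicit formula \eqref{Definition of the potential} for the potential, in which the dependence on the frequency $\omega$ is confined to the last two terms while the curvature term $(\sqrt{\hat{\mu}})''/\sqrt{\hat{\mu}}$ is $\omega$-independent. First I would record the potential at each of the two frequencies as
\begin{equation*}
V_{\omega_j}(x) = \frac{(\sqrt{\hat{\mu}})''}{\sqrt{\hat{\mu}}} - \frac{\omega_j^2}{\hat{\mu}(x)} + \frac{\omega_j^2}{\hat{\mu}_I}, \qquad j = 1,2,
\end{equation*}
noting that the two reconstructed potentials $V_{\omega_1}$ and $V_{\omega_2}$ share the same underlying shear modulus $\hat{\mu}(x)$ (they are produced by the algorithm above for the same medium at two probing frequencies).

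The crux of the argument is simply to subtract these two expressions. Because the term $(\sqrt{\hat{\mu}})''/\sqrt{\hat{\mu}}$ does not depend on $\omega$, it cancels, leaving the single relation
\begin{equation*}
V_{\omega_1}(x) - V_{\omega_2}(x) = \left( \omega_1^2 - \omega_2^2 \right) \left( \frac{1}{\hat{\mu}_I} - \frac{1}{\hat{\mu}(x)} \right),
\end{equation*}
which is linear in the unknown $1/\hat{\mu}(x)$. Solving for $1/\hat{\mu}(x)$ and inverting then yields exactly \eqref{Recovery of Lame parameter}.

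There is essentially no analytic obstacle here; the entire content is the observation that the nonlinear, $\omega$-independent part of the potential drops out of the difference, reducing the reconstruction to elementary algebra. The only point worth checking is that the denominator in \eqref{Recovery of Lame parameter} never vanishes: substituting the relation above shows that $\omega_1^2 - \omega_2^2 - \hat{\mu}_I\left( V_{\omega_1}(x) - V_{\omega_2}(x) \right) = \hat{\mu}_I \left( \omega_1^2 - \omega_2^2 \right) / \hat{\mu}(x)$, which is nonzero since $\hat{\mu}(x), \hat{\mu}_I > 0$ and $\omega_1 \neq \omega_2$. This both validates the formula and confirms that the procedure recovers $\hat{\mu}(x)$ uniquely from the two potentials.
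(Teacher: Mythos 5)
Your proposal is correct and follows essentially the same route as the paper: write out \eqref{Definition of the potential} at the two frequencies, observe that the $\omega$-independent term $(\sqrt{\hat{\mu}})''/\sqrt{\hat{\mu}}$ cancels in the difference, and solve the resulting linear relation for $1/\hat{\mu}(x)$. Your additional check that the denominator in \eqref{Recovery of Lame parameter} is nonvanishing (since $\hat{\mu}, \hat{\mu}_I > 0$ and $\omega_1 \neq \omega_2$) is a small but worthwhile refinement the paper leaves implicit.
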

\begin{proof}
In \eqref{Definition of the potential} we defined the potential as
\begin{equation*}
V_{\omega}= \frac{(\sqrt{\hat{\mu}})''}{\sqrt{\mu}} - \frac{1}{\hat{\mu}}\omega^2 + \frac{1}{\hat{\mu}_I}\omega^2.
\end{equation*}
Then the potential difference at two different frequencies, respectively $\omega_1$ and $\omega_2$ is
\begin{equation*}
V_{\omega_1}(x) - V_{\omega_2}(x) = \left(\frac{1}{\hat{\mu}_I} - \frac{1}{\hat{\mu}(x)}  \right) (\omega^2_1 - \omega^2_2)
\end{equation*}
which leads to \eqref{Recovery of Lame parameter}.
\end{proof}

\begin{remark}
In this paper, we do not treat the stability of the resonances. For more details, we refer the reader to \cite{Korotyaev_stability, MarlettaWeikard} who treat the Dirichlet case.
\end{remark}

\section{acknowledgement}

I want to thank my former supervisor Alexei Iantchenko for having introduced me to the topic of inverse resonance problems and for interesting discussions.

\bibliographystyle{amsplain}
\bibliography{MonographyCitations}

\end{document}